\documentclass[11pt]{article}
\usepackage[english]{babel}
\usepackage[a4paper,scale=0.75,hcentering]{geometry}
\usepackage{amsmath,amsthm,amssymb,mathtools,enumitem}
\usepackage{xcolor}
\usepackage{setspace}
\allowdisplaybreaks
\usepackage[hyperfootnotes=false]{hyperref}
\hypersetup{colorlinks=true,linkcolor=blue,urlcolor=red,citecolor=blue,
  pdftitle={Sharp quantitative stability of the Hong--Krahn--Szeg\H{o} inequality},
  pdfauthor={Yuxuan Zhou, Wenming Zou},
  pdfkeywords={Dirichlet eigenvalues, Hong--Krahn--Szego inequality, quantitative stability, shape optimization, torsional rigidity}}
\usepackage[nameinlink,capitalise,sort]{cleveref}
\crefname{equation}{}{}
\crefname{enumi}{}{}
\newcounter{results}[section]

\newtheorem{theorem}[results]{Theorem}
\newtheorem{lemma}[results]{Lemma}
\newtheorem{proposition}[results]{Proposition}

\theoremstyle{remark}

\numberwithin{equation}{section}
\newcommand{\R}{\mathbb R}\newcommand{\om}{\omega_d}
\newcommand{\lam}{\lambda_B}\newcommand{\A}{\mathcal A_2}
\newcommand{\HKS}{\mathcal D}\newcommand{\sym}{\mathbin\triangle}
\newcommand{\eps}{\varepsilon}\newcommand{\dd}{\,dx}
\DeclareMathOperator{\diam}{diam}
\title{\bfseries Sharp quantitative stability of the Hong--Krahn--Szeg\H{o} inequality\thanks{This work is supported by the National Key R\&D Program of China (Grant 2023YFA1010001) and NSFC~(12571123). E-mail addresses: \href{mailto:zhou-yx22@mails.tsinghua.edu.cn}{\nolinkurl{zhou-yx22@mails.tsinghua.edu.cn}}~(Zhou), \href{mailto:zou-wm@mail.tsinghua.edu.cn}{\nolinkurl{zou-wm@mail.tsinghua.edu.cn}}~(Zou).}}
\author{{\bfseries Yuxuan Zhou, Wenming Zou}\\
  {\footnotesize\itshape Department of Mathematical Sciences, Tsinghua University, Beijing 100084, China.}}
\date{}
\begin{document}
\maketitle
\begin{abstract}
\small
We establish sharp quantitative stability estimates for the Hong--Krahn--Szeg\H{o} inequality for arbitrary open sets of positive finite measure in $\R^d$, $d\ge2$. In dimensions $d\ge3$, the optimal stability exponent is $(d+2)/2$, while in dimension two the sharp stability modulus is $t^2/\log(e/t)$. These results improve the quantitative stability estimate of Brasco and Pratelli~\cite[Theorem 3.5]{BP} and resolve Open Problem 7.56 of Brasco and De Philippis~\cite[Open Problem 7.56]{BDP}.

\vskip0.1in
\noindent\textbf{Key words:} Dirichlet eigenvalues, Hong--Krahn--Szeg\H{o} inequality, quantitative stability, shape optimization, torsional rigidity.

\vskip0.1in
\noindent\textbf{2020 Mathematics Subject Classification:} Primary 35P15; Secondary 49Q10, 49R05.
\end{abstract}

\section{Introduction}\label{sec:introduction}
Let $d\ge2$, $B=B_1(0)\subset\R^d$, $\om=|B|$, and $\lam=\lambda_1(B)$. For an open set $\Omega\subset\R^d$ with $0<|\Omega|<\infty$, let $\lambda_k(\Omega)$ denote the Dirichlet eigenvalues of $-\Delta$, counted with multiplicity and defined by the quadratic form $\int_\Omega|\nabla u|^2\dd$ on $H^1_0(\Omega)$. The Rayleigh--Faber--Krahn inequality~\cite{Faber23,Krahn25,Krahn26} states that the ball minimizes the first eigenvalue at fixed volume:
\begin{equation}\label{eq:classicalFK}
 |\Omega|^{2/d}\lambda_1(\Omega)\ge\om^{2/d}\lam\,.
\end{equation}
Equality holds precisely for balls, up to sets of Sobolev capacity zero; see~\cite[(1.8), $q=2$]{BDPV}. As for the second eigenvalue, the Hong--Krahn--Szeg\H{o} inequality~\cite{Krahn26,Hong54} gives
\begin{equation}\label{eq:classicalHKS}
 |\Omega|^{2/d}\lambda_2(\Omega)\ge(2\om)^{2/d}\lam\,.
\end{equation}
The minimizing configuration is a disjoint union of two equal balls, again up to sets of Sobolev capacity zero. The formulation for arbitrary finite-measure open sets and its equality case are given in~\cite[Section 7.6.1]{BDP}.

In this paper, we study the quantitative stability of the Hong--Krahn--Szeg\H{o} inequality \eqref{eq:classicalHKS}. We seek estimates that control the distance from a set to the family of unions of two disjoint equal balls in terms of its spectral deficit, with the best possible rate. We do not attempt a detailed review of quantitative spectral inequalities. For background, see Henrot~\cite{Henrot2006} and Brasco and De Philippis~\cite{BDP}; see also the earlier stability results of Melas~\cite{Melas1992} and Fusco, Maggi and Pratelli~\cite{FMP09}, and the recent work on the Dirichlet spectrum by Bucur, Lamboley, Nahon and Prunier~\cite{BLNP2026}.

For the first eigenvalue, the Fraenkel asymmetry is
\begin{equation*}
 \mathcal A(\Omega)=\inf\left\{\frac{|\Omega\sym B'|}{|\Omega|}:B'\text{ is a ball},\ |B'|=|\Omega|\right\}.
\end{equation*}
Brasco--De Philippis--Velichkov~\cite[Main Theorem, $q=2$]{BDPV} proved the sharp quadratic estimate
\begin{equation}\label{eq:sharpFK}
 |\Omega|^{2/d}\lambda_1(\Omega)-\om^{2/d}\lam\ge c_d\mathcal A(\Omega)^2
\end{equation}
for every open set of positive finite measure.

For the HKS problem, define
\begin{align}
 \HKS(\Omega)&=|\Omega|^{2/d}\lambda_2(\Omega)-(2\om)^{2/d}\lam\,,\label{eq:defD}\\
 \A(\Omega)&=\inf\left\{\frac{|\Omega\sym(B_1\cup B_2)|}{|\Omega|}:B_1\cap B_2=\varnothing,\ |B_1|=|B_2|=|\Omega|/2\right\}\,.\label{eq:defA}
\end{align}
Here $B_1,B_2$ are open balls, and tangency is allowed. The quantity $\A$ is the Fraenkel $2$-asymmetry. Brasco--Pratelli~\cite[Theorem 3.5]{BP} proved
\begin{equation}\label{eq:historicalBP}
 \HKS(\Omega)\ge c_d\A(\Omega)^{2(d+1)}
\end{equation}
for bounded open sets. The optimality of this exponent was left open. As explained in~\cite[Remark 3.6]{BP}, if the single-ball stability estimate has exponent $\kappa$, their argument gives the HKS exponent $\kappa(d+1)/2$. Thus based on the estimate \eqref{eq:sharpFK},their argument yields exponent $d+1$. Brasco--De Philippis~\cite[Theorem 7.51]{BDP} record the resulting estimate
\begin{equation}\label{eq:historicalBDP}
 \HKS(\Omega)\ge c_d\A(\Omega)^{d+1}
\end{equation}
for every open set of positive finite measure. In~\cite[Open Problem 7.56]{BDP}, they ask whether $d+1$ is optimal and, if not, what the correct exponent is.

In this paper we resolve this open problem. Set $\Phi_d(0)=0$ and, for $0<a\le2$, define
\begin{equation}\label{eq:Phi}
 \Phi_d(a)=\begin{cases}
 a^2/\log(e/a),&d=2,\\
 a^{(d+2)/2},&d\ge3.
 \end{cases}
\end{equation}
We prove the following stability estimate. 
\begin{samepage}
\begin{theorem}\label{thm:main}
For every $d\ge2$ there exists $c_d>0$ such that every open $\Omega\subset\R^d$ with $0<|\Omega|<\infty$ satisfies
\begin{equation}\label{eq:main}
 \HKS(\Omega)\ge c_d\Phi_d(\A(\Omega))\,.
\end{equation}
\end{theorem}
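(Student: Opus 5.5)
We follow the Brasco--Pratelli strategy, but replace its lossy step with a sharper analysis of how the second eigenfunction splits $\Omega$.

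By scaling, normalize $|\Omega|=2\om$, so that the target value is $\lam$. It suffices to treat $\HKS(\Omega)\le\delta_d$ for a small constant $\delta_d$, since $\A\le2$ makes the remaining case trivial. By the usual approximation we may also assume $\Omega$ is bounded. Let $u$ be a second eigenfunction with nodal domains $\Omega_+=\{u>0\}$ and $\Omega_-=\{u<0\}$. Then $\lambda_2(\Omega)\ge\max\{\lambda_1(\Omega_+),\lambda_1(\Omega_-)\}$. Set $m_\pm=|\Omega_\pm|$, $\eta=2\om-m_+-m_-\ge0$ (the measure not covered by the nodal domains), and write $m_\pm=\om(1\pm s)-\eta/2$. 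Faber--Krahn applied to each nodal domain gives
\begin{equation*}
\lambda_2(\Omega)\ge\lam\,\om^{2/d}\max\{m_+,m_-\}^{-2/d}.
\end{equation*}
This already yields $\HKS\gtrsim \eta+|s|$ for the mass imbalance and the lost mass. The Brasco--De Philippis--Velichkov estimate \eqref{eq:sharpFK} applied to $\Omega_\pm$ controls $\mathcal A(\Omega_\pm)^2$ by the deficits of $\Omega_\pm$, and these in turn are bounded by $C\HKS$. Hence there are balls $B_\pm$ with $|B_\pm|=m_\pm$ and $|\Omega_\pm\sym B_\pm|\lesssim\HKS^{1/2}$.

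The loss in Brasco--Pratelli comes from the interaction of the two balls. $B_+$ and $B_-$ may overlap, and to produce two \emph{disjoint} balls one must push them apart. The quantity $\A$ then also pays for the overlap $|B_+\cap B_-|$. Since $\Omega_+$ and $\Omega_-$ are disjoint, $|B_+\cap B_-|\le|\Omega_+\sym B_+|+|\Omega_-\sym B_-|$. If the centers are at distance $2-h$, the lens $B_+\cap B_-$ has volume about $h^{(d+1)/2}$. Moving the balls apart by $h$ changes the symmetric difference by only about $h$. This tradeoff is where the exponent $d+1$ came from.

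The new ingredient should be a sharper lower bound on the deficit in the near-tangent regime. We would prove the key lemma: if $\Omega_\pm$ are disjoint and close to two balls whose centers are at distance $2-h$, then
\begin{equation*}
\lambda_1(\Omega_+)+\lambda_1(\Omega_-)-2\lam\gtrsim \Psi_d(h),
\end{equation*}
where $\Psi_d(h)$ is about $h^{(d+2)/2}$ for $d\ge3$ and about $h^2/\log(1/h)$ for $d=2$. The heuristic is as follows. Each domain must avoid the other's half of the lens, a cap of width about $h$ and radius about $\sqrt h$. Removing such a cap from a ball costs eigenvalue comparable to $\int_{\text{cap}}|\nabla u_B|^2$ plus a capacitary correction. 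Since the ball eigenfunction $u_B$ vanishes linearly at the boundary, the gain is about $h^2\cdot h^{(d-1)/2}\cdot$ (capacity factor). This produces $h^{(d+2)/2}$ in $d\ge3$, with a logarithmic loss in $d=2$ from the planar capacity of a segment of length $\sqrt h$.

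To make this rigorous we would not compare with exact balls. Instead we would use a selection principle, or a torsion-function substitute, as in \cite{BDPV}. We reduce to nearly spherical sets written as normal graphs over the two spheres, $\partial\Omega_\pm=\{(1+\varphi_\pm(\theta))\theta\}$. For such sets the second-order expansion gives a deficit comparable to $\|\varphi_\pm\|_{H^{1/2}}^2$, up to the translation and dilation modes. Disjointness forces $\varphi_+ +\varphi_-\lesssim -(h-|x|^2)_+$ along the contact direction, with $x$ the tangential coordinate. The resulting obstacle problem is to minimize $\|\varphi\|_{H^{1/2}}^2$ subject to $\varphi\le-(h-|x|^2)_+$ on a region of size $\sqrt h$. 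Its minimum should be about $h^2\,\mathrm{cap}_{1/2}(B_{\sqrt h}^{d-1})$, which is $h^{2}h^{(d-2)/2}=h^{(d+2)/2}$ for $d\ge3$. For $d=2$ one gets $h^2/\log(1/h)$, because the $H^{1/2}$-capacity of an interval on the circle is logarithmic.

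Combining the pieces, we choose disjoint balls by pushing $B_\pm$ apart by $h$. This gives
\begin{equation*}
\A\lesssim \HKS^{1/2}+h+\eta+|s|.
\end{equation*}
With the lemma, $h\lesssim\Psi_d^{-1}(\HKS)$. Since $\Psi_d^{-1}(t)$ dominates $t^{1/2}$ for small $t$, inverting gives $\HKS\gtrsim\Phi_d(\A)$.

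The main obstacle is the key lemma. Controlling the deficit of general (non-smooth, non-graph) nodal domains through the $H^{1/2}$ obstacle problem needs the selection principle and regularity machinery, and in $d=2$ one must track the logarithm exactly. A secondary check is sharpness: two balls of radius slightly less than $1$, each with a cap of height about $h$ flattened so that they stay disjoint, should realize $\Phi_d$. This check also confirms that the obstacle computation has the right scaling.
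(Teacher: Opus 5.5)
You have the outer reduction right, and it matches the paper's: two disjoint spectral pieces, Faber--Krahn mass balance, $O(\HKS^{1/2})$ closeness to balls, pushing the balls apart by $h$, and inverting. Your scaling $h^2\,\mathrm{cap}(B^{d-1}_{\sqrt h})$ is also the correct one. But the whole improvement from $d+1$ to $(d+2)/2$ lives in your key lemma, and the proposal does not prove it.

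First, the lemma needs a normalization. With unspecified comparison balls it is false: two exactly tangent unit balls have zero deficit, yet they are within $O(h)$ in measure of balls whose centers are at distance $2-h$. Correspondingly, the $H^{1/2}$ coercivity of the second variation holds only modulo translations. Unless you pin the centers, e.g.\ to barycenters, your obstacle problem has zero cost, since translating the balls apart by $h/2$ each meets the constraint for free.

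More seriously, the selection-principle route is not an off-the-shelf step. The scheme of \cite{BDPV} is a compactness argument for one domain and a quadratic inequality. It relies on strong ($C^{2,\gamma}$) convergence of penalized minimizers to the ball, so that a Fuglede expansion with $o(\|\varphi\|_{H^{1/2}}^2)$ remainder applies. For a pair under a disjointness constraint this fails. Near-optimal competitors must flatten a cap of radius $\sim\sqrt h$, where $D^2\varphi$ is of order one. Near the contact set the penalized minimizers solve a two-phase free boundary problem, so that control is unavailable. Moreover, the target $\Psi_d(h)$ is not quadratic in the perturbation. As written, the main step remains a heuristic.

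The missing idea is that no regularity is needed if you work with torsion functions in the bulk. The paper proceeds as follows.
\begin{enumerate}
\item By Kohler--Jobin and Saint-Venant, each piece has torsion deficit $\lesssim s$.
\item Lemma~\ref{lem:prepare} then gives bounded $F_i$ with barycenters $c_i$ and, for arbitrary open sets, the quadratic $H^1$ control $\int_{F_i}|\nabla w_{F_i}+(x-c_i)/d|^2\dd+\|w_{F_i}-w_{B_1(c_i)}\|_2^2\lesssim s$. This combines the cutoff and $\alpha\lesssim D$ estimates of \cite{BDPV}, the moment identity, and the resolvent bound of \cite{AKN}.
\item Disjointness gives $\min(w_1,w_2)=0$. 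With $p_i=(1-|x-c_i|^2)/(2d)$ and $t\sim\tau$, the truncations $\min\{(p_i-w_i)_+,t\}$ add up to an $H^1(\R^d)$ function of squared norm $\lesssim s$ that is at least $t$ on the lens $B_R(c_1)\cap B_R(c_2)$.
\item That lens contains a slab of thickness $\sim\tau$ and radius $\sim\sqrt\tau$. An elementary averaging/capacity estimate for this thin disk (Proposition~\ref{prop:contact}) yields $s\gtrsim\tau^{(d+2)/2}$, resp.\ $\tau^2/\log(e/\tau)$ when $d=2$.
\end{enumerate}
This is the bulk $H^1$ counterpart of your boundary $H^{1/2}$ obstacle problem, with the barycentric centers supplying the translation normalization.

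Two minor slips. Your Faber--Krahn display needs $\min\{m_+,m_-\}$, not $\max$, to give $\HKS\gtrsim\eta+|s|$. Also, for disconnected $\Omega$ a second eigenfunction need not change sign, so the splitting should follow \cite[Lemma 7.49]{BDP}.
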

\end{samepage}

Our next theorem shows that the estimate \eqref{eq:main} is sharp even among smooth sets with two components.

\newpage
\begin{samepage}
\begin{theorem}\label{thm:sharp}
For every $d\ge2$ and for all sufficiently small $\eps>0$, there are smooth bounded open sets $\Omega_\eps$, each the union of two congruent disjoint domains, and two dimensional positive constants $c_d,C_d$ such that
\begin{equation}\label{eq:sharp}
 c_d\eps\le\A(\Omega_\eps)\le C_d\eps,
 \qquad c_d\Phi_d(\eps)\le\HKS(\Omega_\eps)\le C_d\Phi_d(\eps).
\end{equation}
\end{theorem}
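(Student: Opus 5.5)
We take $\Omega_\eps$ to be the union of two mirror-image truncated balls: a unit ball pushed across a hyperplane, with the protruding cap cut off. The key observation is that for a union of two congruent disjoint domains $D\cup D'$ we have $\lambda_2(\Omega)=\lambda_1(D)$ and $|\Omega|=2|D|$. Hence
\[
\HKS(\Omega)=2^{2/d}\bigl(|D|^{2/d}\lambda_1(D)-\om^{2/d}\lam\bigr),
\]
so everything reduces to the Faber--Krahn deficit of a single component $D$. Since \eqref{eq:sharpFK} is quadratic in the ordinary Fraenkel asymmetry and $\Phi_d(a)\ll a^2$ for $d\ge3$, the construction must make $\A(\Omega_\eps)$ much larger than $\mathcal A(D)$. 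The mechanism is the disjointness constraint on $B_1,B_2$ in \eqref{eq:defA}.

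\textbf{Construction.} For small $\eps>0$ let
\[
D_\eps=B_1(-(1-\eps)e_1)\cap\{x_1<0\},
\]
that is, a unit ball from which a cap of height $\eps$ has been removed. We smooth the rim of the flat face at a scale $o(\sqrt\eps)$ (say $\eps^{3/4}$) and shift it slightly so that it lies in $\{x_1<-\eps^2\}$. Then we set $\Omega_\eps=D_\eps\cup(-D_\eps)$. The flat face is a disk of radius $\sim\sqrt\eps$ and the cap has volume $\sim\eps^{(d+1)/2}$.

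\textbf{Asymmetry.} For the upper bound we use as competitors the two tangent balls of volume $|D_\eps|$ centered at $\mp r e_1$. Each differs from $\pm D_\eps$ by a crescent of width $O(\eps)$, so $\A(\Omega_\eps)\le C_d\eps$. For the lower bound, note that admissible balls are disjoint and of equal volume $|D_\eps|$. By a short case analysis, either the ball covering most of $D_\eps$ is centered at $-te_1+y$ with $t\ge r$ up to $o(\eps)$ (it must stay on the left of the ball covering $-D_\eps$), or the pairing is badly mismatched and the symmetric difference is of order one. In the first case, $D_\eps$ contains the region $\{-\eps<x_1<0\}$ of the original ball up to the removed cap, which has volume $\sim\eps$. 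Comparing with any ball of the given volume lying essentially in $\{x_1<-t+r\}$ with $t\ge r-o(\eps)$ forces a mismatch $\ge c\eps$, by an elementary volume computation for translated balls. So $\A(\Omega_\eps)\ge c_d\eps$.

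\textbf{Deficit.} The lower bound $\HKS(\Omega_\eps)\ge c_d\Phi_d(\eps)$ follows directly from \cref{thm:main} together with $\A(\Omega_\eps)\ge c_d\eps$ and the monotonicity of $\Phi_d$. The upper bound is the main obstacle. We expect the deficit to be governed by an $H^{1/2}$-type norm of the normal displacement $V$, which here is of size $\eps$ on a disk of radius $\sqrt\eps$:
\[
\|V\|_{H^{1/2}}^2\sim\eps^2(\sqrt\eps)^{d-2}=\eps^{(d+2)/2}\quad(d\ge3),
\]
with a logarithmic loss when $d=2$. The first-order Hadamard term cancels after volume normalization because $|\partial_\nu u_B|$ is constant on the sphere. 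The naive $L^2$ second-order count would give only $\eps^{(d+3)/2}$, so the nonlocal part is what matters.

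To prove the upper bound, we build a test function for $\lambda_1(D_\eps)$ by modifying the ball eigenfunction. Let $u$ be the first eigenfunction of the ball $B_1(-(1-\eps)e_1)$. Near the flat face $u\approx c\,(1-|x+(1-\eps)e_1|)$, which is of size $\le C\eps$ on the cap. We take $v=u-w$, where $w$ is the harmonic-type corrector in the half-ball side, with boundary values equal to $u$ on the face disk $\{x_1=0\}$, cut off at scale $\sqrt\eps$. Then
\[
\int|\nabla w|^2\lesssim\|u|_{\text{disk}}\|_{H^{1/2}}^2\sim\eps^2\eps^{(d-2)/2},
\]
and in $d=2$ the $H^{1/2}$ norm of a bump of height $\eps$ and width $\sqrt\eps$ decaying linearly to zero produces $\eps^2/\log(1/\eps)$ after optimizing the cut-off profile; this logarithmic optimization is the delicate point. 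We then compute
\[
\lambda_1(D_\eps)\le\frac{\int|\nabla v|^2}{\int v^2}=\lam-2\int\nabla u\cdot\nabla w+\dots,
\]
integrate the cross term by parts, and pair its leading part with the volume loss of the cap so that the first-order terms cancel exactly. What remains is a quantity of order $\Phi_d(\eps)$.

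Finally we check that the smoothing of the rim and the $\eps^2$ separation change both $\A$ and $\HKS$ by $o(\Phi_d(\eps))$. This yields \eqref{eq:sharp}.
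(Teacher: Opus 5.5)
\textbf{The gap is in $d=2$.} For $d\ge3$ your construction is the paper's, up to reflection. Your corrector computation (first-order cancellation via the constancy of $|\partial_\nu u|$ and Rellich's identity, leaving a remainder $\approx\int|\nabla w|^2\sim\eps^2(\sqrt\eps)^{d-2}$) is a reasonable self-contained substitute for the truncated-ball estimate of van den Berg that the paper cites. The smoothing is easiest done softly, with a smooth convex inner exhaustion and continuity of $\lambda_1$.

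In $d=2$ the truncated disk does not attain $\Phi_2(\eps)=\eps^2/\log(e/\eps)$, and no choice of cut-off can change that. A test function $v=u-w\in H^1_0(D_\eps)$ forces $w=u$ on the face. There $u(0,y)\approx k(\eps-y^2/2)_+=\eps\,\phi(y/\sqrt{2\eps})$ for a fixed Lipschitz bump $\phi$, and $w=0$ on the arc. The harmonic extension already minimizes $\int_{D_\eps}|\nabla w|^2$ among such $w$. The trace inequality is uniform because $D_\eps$ is uniformly Lipschitz, and the one-dimensional $\dot H^{1/2}$ seminorm is scale invariant (restrict the Gagliardo integral to the face). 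Together these give
\[
\int_{D_\eps}|\nabla w|^2\ge c\,\bigl[\eps\,\phi(\cdot/\sqrt{2\eps})\bigr]_{\dot H^{1/2}}^2=c\,[\phi]_{\dot H^{1/2}}^2\,\eps^2 .
\]
This energy is exactly the term that survives the first-order cancellation, so your scheme gives at best $\HKS(\Omega_\eps)\lesssim\eps^2$.

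In fact the truncated disk itself is not sharp. Write its eigenfunction as $u-w$ with $w\perp u$ in $L^2$, split $w$ on $D_\eps$ into its harmonic part plus an $H^1_0(D_\eps)$ part, and use the spectral gap of the disk. This shows its Faber--Krahn deficit is $\asymp\eps^2$. So these sets satisfy $\HKS\asymp\A^2$ and miss the sharp modulus by a factor $\log(1/\eps)$. Your own scaling $\eps^2(\sqrt\eps)^{d-2}$ at $d=2$ already reads $\eps^2$, with no logarithm either way.

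\textbf{The missing idea.} The $1/\log$ gain is a planar capacity effect, and it must be built into the domain, not the test function. The boundary itself has to be displaced by about $2\eps$ on a plateau of size $a\gg\sqrt\eps$. The displacement must then decay logarithmically over $a\le r\le R$ with $R/a\to\infty$, so that its Dirichlet energy is $\eps^2/\log(R/a)$. The paper takes $a=\eps^{1/4}$, $R=1/8$, and $E^+=F(B^+)$ with $F(x)=x+g(|x-z|)e_1$, where $g(r)\approx2\eps\log(R/r)/\log(R/a)$.
\begin{itemize}
\item Lemma~\ref{lem:rayleigh} (Rayleigh quotient of $u\circ F^{-1}$, with the same Rellich cancellation you invoke) bounds the deficit by $C\int|DV|^2\le C\eps^2/|\log\eps|$.
\item On $\overline{B^+}\cap\{x_1\le\eps\}$ one has $|x-z|^2\le4\eps<a^2$, so these points lie on the plateau. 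This is what separates the two components.
\item The removed volume is $O(\eps/|\log\eps|)=o(\eps)$, so Lemma~\ref{lem:matching} still gives $\A\asymp\eps$.
\end{itemize}

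\textbf{A smaller point.} Your dichotomy for the lower bound on $\A$ is not correct as stated, since disjointness only gives $|q_+-q_-|\ge2r$. For example, $q_-=-(r-\eps)e_1$, $q_+=(r+\eps)e_1$ fits neither case; the mismatch then sits on the other component. Comparing second moments, using $|q_+|^2+|q_-|^2\ge\frac12|q_+-q_-|^2\ge2r^2$ as in Lemma~\ref{lem:matching}, handles all placements at once.
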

\end{samepage}
Let us briefly outline our strategy of the proof.

To prove Theorem~\ref{thm:main}, we first reduce the problem to two disjoint spectral pieces, following~\cite[Lemma 3.1]{BP} and~\cite[Lemma 7.49]{BDP}. By the Kohler--Jobin inequality, a small spectral deficit also gives small torsion deficits for these pieces. Combining the cutoff estimates and the quantitative torsion estimates of Brasco, De Philippis and Velichkov~\cite{BDPV} with the resolvent estimate of Allen, Kriventsov and Neumayer~\cite{AKN}, we obtain Lemma~\ref{lem:prepare}. This allows us to retain two bounded disjoint subsets whose torsion functions satisfy both an interior gradient estimate and a full-space $L^2$ estimate relative to balls centered at their barycenters. The main difficulty is that these comparison balls may overlap, even though the subsets themselves are disjoint. To overcome this difficulty, we exploit the disjointness more directly. On the intersection of the comparison balls, the two torsion functions cannot both be positive, which allows us to construct auxiliary functions with a positive lower bound on this region. In Proposition~\ref{prop:contact}, we combine this observation with elementary averaging estimates and the Poincar\'e inequality to control the overlap sharply. This gives the optimal stability exponent in dimensions $d\ge3$ and the logarithmic correction in dimension two.

For Theorem~\ref{thm:sharp}, we use the nearly tangent two-ball geometry of~\cite[Example 3.4]{BP} and the truncated-ball eigenvalue estimate of van den Berg~\cite[Theorem 1(ii)]{vdB}. Smooth inner approximations give the higher-dimensional examples. In dimension two, a logarithmic displacement is needed to obtain the sharp modulus. Lemma~\ref{lem:rayleigh} controls the spectral deficit by the deformation energy, while Lemma~\ref{lem:matching} ensures that the Fraenkel $2$-asymmetry is of order $\eps$.

In Section~\ref{sec:preparation} we collect  preliminary estimates. Section~\ref{sec:proofs} is then devoted to the proofs of main theorems.

\section{Preliminary results}\label{sec:preparation}
For an open set $E$ of positive finite measure, let $w_E\in H^1_0(E)$ solve $-\Delta w_E=1$ weakly. All torsion functions are extended by zero to $\R^d$. Write
\begin{gather*}
 T(E)=\int_Ew_E\dd=\int_E|\nabla w_E|^2\dd,\qquad \mathcal E(E)=-\tfrac12T(E),\\*
 D(E)=\mathcal E(E)|E|^{-(d+2)/d}-\mathcal E(B)\om^{-(d+2)/d}\,.
\end{gather*}
The torsion deficit $D(E)$ is scale invariant, and the ball torsion is
\begin{equation}\label{eq:balltorsion}
 w_{B_r(c)}(x)=\frac{(r^2-|x-c|^2)_+}{2d}\,.
\end{equation}
For bounded open set $U$ with $|U|=\om$, let $b_U=\om^{-1}\int_Ux\dd$ and define
\begin{equation*}
 \alpha(U)=\int_{U\sym B_1(b_U)}\big|1-|x-b_U|\big|\dd\,.
\end{equation*}

The first lemma presents the classical Saint--Venant inequality and the Kohler--Jobin inequality; see~\cite[(2.5)--(2.6), $q=2$]{BDPV} for example.
\begin{lemma}\label{lem:classical}
Let $d\ge2$, and let $E\subset\R^d$ be open with $0<|E|<\infty$. Then
\begin{align}
 T(E)&\le T(B)\left(\frac{|E|}{\om}\right)^{(d+2)/d},\label{eq:SV}\\*
 \lambda_1(E)T(E)^{2/(d+2)}&\ge\lam T(B)^{2/(d+2)}\,.\label{eq:KJ}
\end{align}
Equality in~\eqref{eq:SV} holds precisely for balls, up to sets of Sobolev capacity zero. In particular $D(E)\ge0$.
\end{lemma}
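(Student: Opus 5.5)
Both inequalities will follow from the variational characterizations of $T$ and $\lambda_1$, combined with Schwarz symmetrization and, for \eqref{eq:KJ}, a reduction to radial functions.

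\textbf{Step 1: the Saint--Venant inequality \eqref{eq:SV}.} I start from the dual formulation
\[
 \mathcal E(E)=-\tfrac12T(E)=\min_{u\in H^1_0(E)}\Big(\tfrac12\int|\nabla u|^2-\int u\Big),
\]
whose minimizer is $w_E\ge0$. Let $E^*$ be the ball with $|E^*|=|E|$ and $w^*$ the Schwarz rearrangement of $w_E$. Then $w^*\in H^1_0(E^*)$, $\int w^*=\int w_E$, and P\'olya--Szeg\H{o} gives $\int|\nabla w^*|^2\le\int|\nabla w_E|^2$. Hence $\mathcal E(E^*)\le\mathcal E(E)$, that is, $T(E)\le T(E^*)$. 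Scaling \eqref{eq:balltorsion} gives $T(B_r)=r^{d+2}T(B)$, which yields \eqref{eq:SV}. Multiplying by $-\tfrac12|E|^{-(d+2)/d}$ shows this is exactly $D(E)\ge0$.

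\textbf{Step 2: the equality case.} If equality holds, then $w^*$ is also a minimizer on $E^*$, so $w^*=w_{E^*}$. This function has no flat zones in the interior of its support, so $|\{\nabla w^*=0,\,0<w^*<\max w^*\}|=0$. By Brothers--Ziemer, equality in P\'olya--Szeg\H{o} then forces $w_E$ to be a translate of $w^*$. Consequently $\{w_E>0\}$ is a ball $B'$ with $|B'|=|E|$, and $B'\subset E$.

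It remains to show that $E\setminus B'$ has zero capacity. The set $E\setminus B'$ has measure zero. If it had positive capacity, then $H^1_0(E)\supsetneq H^1_0(B')$, and strict monotonicity of torsion would give $T(E)>T(B')$, contradicting equality. Indeed, for $\varphi\in H^1_0(E)\setminus H^1_0(B')$, the function $w_{B'}+t\varphi$ beats $w_{B'}$ for small $t$ unless $\int\nabla w_{B'}\cdot\nabla\varphi=\int\varphi$ for all such $\varphi$. That identity would make $w_{B'}$ the torsion function of $E$, whereas $w_E=w_{B'}$ already; so the argument must go through the capacity characterization directly. I expect this step to be the most delicate, and would cite \cite{BDPV} for it rather than reprove it.

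\textbf{Step 3: the Kohler--Jobin inequality \eqref{eq:KJ}.} The quantity $\lambda_1(E)T(E)^{2/(d+2)}$ is scale invariant. My route is the classical one: replace the first eigenfunction by a modified test function on level sets and use the coarea formula, as in Kohler--Jobin's rearrangement, or alternatively Brasco's proof via the inequality $\lambda_1\ge\lambda_1(\cdot)$ of a ball with the same torsion. Concretely, I would show that $\lambda_1(E)\ge\lambda_1(B_\rho)$, where $T(B_\rho)=T(E)$. To do this I rearrange $u_1$ with respect to the "torsion measure" of level sets rather than Lebesgue measure: one uses a transplantation of level sets of $u_1$ onto balls of equal torsion, together with the Saint--Venant inequality applied to each superlevel set and a one-dimensional comparison argument.

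This transplantation step is the main obstacle. It needs the monotonicity of $t\mapsto T(\{u_1>t\})$ and a Hardy-type one-dimensional inequality, and I would follow \cite[(2.6)]{BDPV} for the details.
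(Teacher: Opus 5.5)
The paper gives no argument for this lemma; it quotes both inequalities and the equality case from \cite[(2.5)--(2.6)]{BDPV}. Your Step~1 goes beyond that and is a correct, self-contained proof of \eqref{eq:SV} and of $D(E)\ge0$. The ingredients are $w^*\in H^1_0(E^*)$, $\int w^*=\int w_E$, P\'olya--Szeg\H{o}, and $T(B_r)=r^{d+2}T(B)$. Step~3 describes the Kohler--Jobin rearrangement but does not prove it. Deferring to the literature there is exactly what the paper does, so that is acceptable.

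The one real error is in Step~2. Brothers--Ziemer gives $w_E=w_{B'}$ a.e.\ for a ball $B'$ with $|B'|=|E|$. Since $w_{B'}>0$ on $B'$ and $w_E=0$ off $E$, you get $|B'\setminus E|=0$, and hence $|E\setminus B'|=0$. Because $E$ is open, any point of $E\setminus B'$ (including a point of $\partial B'$) would have a neighbourhood in $E$ meeting $\R^d\setminus\overline{B'}$ in positive measure. So $E\subset B'$, not $B'\subset E$; for example, $E=B\setminus\{0\}$ gives equality. This is exactly the form the paper later uses in the proof of Lemma~\ref{lem:prepare} to conclude that $E$ is bounded.

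The exceptional set is therefore $B'\setminus E$, and your comparison $H^1_0(E)\supsetneq H^1_0(B')$ points the wrong way. That is why your first-variation attempt could not close. With the correct inclusion the step is short rather than delicate:
\begin{itemize}
\item $w_{B'}=w_E\in H^1_0(E)$.
\item The quasi-continuous representative of any element of $H^1_0(E)$ vanishes quasi-everywhere on $\R^d\setminus E$.
\item $w_{B'}$ is continuous, hence its own quasi-continuous representative, and it is strictly positive on $B'$.
\end{itemize}
Hence $B'\setminus E$ has zero capacity.

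You should also record the converse half of ``precisely''. If $K$ is relatively closed in $B'$ with zero capacity, then $H^1_0(B'\setminus K)=H^1_0(B')$. So the torsion function and $T$ are unchanged, and equality holds.
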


We will also use the following estimates from~\cite[Lemmas 4.2, 5.2, and Theorem 4.3]{BDPV}.
\begin{lemma}\label{lem:BDPV}
Let $d\ge2$. Then every bounded open set $U$ with $|U|=\om$ satisfies
\begin{equation}\label{eq:alphashape}
 |U\sym B_1(b_U)|^2\le C_d\alpha(U)\,,
\end{equation}
and every open set $E$ of positive finite measure satisfies
\begin{equation}\label{eq:preliminaryA}
 \mathcal A(E)^4\le C_dD(E)\,.
\end{equation}
For each $M\ge2$, there exist $\eta(d,M),C(d,M)>0$ such that, if $U,V\subset B_M(z)$ are open sets of volume $\om$, then
\begin{equation}\label{eq:alphaLip}
 |\alpha(U)-\alpha(V)|\le C(d,M)|U\sym V|\,,
\end{equation}
and if $\alpha(U)\le\eta(d,M)$, then
\begin{equation}\label{eq:alpha}
 \alpha(U)\le C(d,M)(\mathcal E(U)-\mathcal E(B))\,.
\end{equation}
\end{lemma}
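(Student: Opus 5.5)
The four estimates of Lemma~\ref{lem:BDPV} are taken from~\cite{BDPV}, and I would organise the verification from the softest to the hardest. \textbf{Estimate~\eqref{eq:alphaLip}} comes first, because it reduces to an identity. Since $|U|=|B_1(b_U)|$, splitting $U\sym B_1(b_U)$ into $U\setminus B_1(b_U)$ and $B_1(b_U)\setminus U$ gives
\[
 \alpha(U)=\int_U\big(|x-b_U|-1\big)\dd-\int_{B_1(b_U)}\big(|x-b_U|-1\big)\dd=\int_U|x-b_U|\dd-\kappa_d\,,
\]
with $\kappa_d$ depending only on $d$. For $U,V\subset B_M(z)$ we have $|b_U-b_V|\le 2M\om^{-1}|U\sym V|$. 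Then $\bigl|\int_U|x-b_U|\dd-\int_V|x-b_V|\dd\bigr|$ is at most $\om|b_U-b_V|+2M|U\sym V|$, which proves~\eqref{eq:alphaLip}.

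\textbf{Estimate~\eqref{eq:alphashape}} follows by rearrangement. Put $m=|U\sym B_1(b_U)|$, so that $|U\setminus B_1(b_U)|=|B_1(b_U)\setminus U|=m/2$. Among sets of measure $m/2$ outside the unit ball, $\int(|x-b_U|-1)$ is minimised by the annulus $1<|x-b_U|<R$ with $\om(R^d-1)=m/2$. This minimum is $\gtrsim m^2$ for $m$ small, and $\gtrsim 1\gtrsim m^2$ otherwise, since $m\le2\om$. The inner part is handled the same way with the inner annulus.

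\textbf{Estimate~\eqref{eq:preliminaryA}} is the non-sharp quantitative Saint--Venant inequality. I would prove it by Talenti's comparison along the level sets $\{w_E>t\}$. The deficit $D(E)$ dominates an integral over $t$ of the isoperimetric deficits of these level sets. By the quantitative isoperimetric inequality of Fusco--Maggi--Pratelli, each such deficit is $\gtrsim\mathcal A(\{w_E>t\})^2$. Two further steps then combine. First, the measure of $\{0<w_E<t\}$ is small for small $t$, by the Saint--Venant bound~\eqref{eq:SV} and the explicit ball profile~\eqref{eq:balltorsion}. Second, the asymmetry of $E$ exceeds that of a level set by at most this missing measure. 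Optimising over the threshold $t$ loses a square and gives the fourth power.

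\textbf{Estimate~\eqref{eq:alpha}} is the main obstacle, namely the sharp local quadratic stability. I would follow the selection-principle strategy. Argue by contradiction with sets $U_n$ such that $\alpha(U_n)\to0$ but $\mathcal E(U_n)-\mathcal E(B)=o(\alpha(U_n))$. Replace them by minimisers of a penalised functional of the form $\mathcal E(U)+\Lambda\bigl||U|-\om\bigr|+\sqrt{\eps_n^2+(\alpha(U)-\eps_n)^2}$ inside $B_M(z)$; this is where~\eqref{eq:alphaLip} is used. Next, show that these minimisers are quasi-minimisers of the one-phase Alt--Caffarelli free boundary problem. Their free boundaries are then $C^{1,\gamma}$-graphs converging to the sphere, hence nearly spherical sets. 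Finally, a Fuglede-type second variation of $\mathcal E$ at the ball, after removing translations via the barycenter normalisation, gives $\mathcal E(U)-\mathcal E(B)\gtrsim\|\varphi\|_{L^2(\partial B)}^2\simeq\alpha(U)$ for such sets, which is the desired contradiction. The delicate point is the uniform regularity of the penalised minimisers.
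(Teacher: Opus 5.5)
Your outline is correct and follows the same route as the source. The paper gives no proof of this lemma but imports it from \cite{BDPV}, where \eqref{eq:alphaLip} and \eqref{eq:alphashape} are the elementary arguments you give (the identity $\alpha(U)=\int_U|x-b_U|\dd-\kappa_d$ and the bathtub comparison with an annulus), \eqref{eq:preliminaryA} comes from Talenti's symmetrization combined with the quantitative isoperimetric inequality, and \eqref{eq:alpha} comes from the selection principle, free-boundary regularity of the penalised minimisers and a Fuglede-type expansion for nearly spherical sets.

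One step in your sketch of \eqref{eq:preliminaryA} needs correcting: the bound on $|\{0<w_E\le t\}|$ does not follow from \eqref{eq:SV} and the ball profile alone, since Talenti's comparison $|\{w_E>t\}|\le|\{w_B>t\}|$ only bounds this measure from below. It follows from combining that comparison with the smallness of $T(B)-T(E)=\int_0^\infty\bigl(|\{w_B>t\}|-|\{w_E>t\}|\bigr)\,dt$ and the Lipschitz continuity of $t\mapsto|\{w_B>t\}|$, which gives $|\{0<w_E\le t\}|\lesssim t+\sqrt{D(E)}$ when $|E|=\om$.
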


Our next lemma comes from Allen--Kriventsov--Neumayer~\cite[Section 5.1, Section 5.2, and Lemma 3.1, (3.3)]{AKN}.
\begin{lemma}\label{lem:AKN}
Let $d\ge2$. For each bounded open set $U\subset\R^d$ with $|U|=\om$, there is a point $x_U$, the truncated barycenter of $U$, such that
\begin{equation}\label{eq:resolvent}
 \|u_U^f-u_{B_1(x_U)}^f\|_{L^2(\R^d)}^2\le C_d(\mathcal E(U)-\mathcal E(B))
\end{equation}
for every $f\in C_c^\infty(\R^d)$ with $0\le f\le1$. Here $u_G^f\in H^1_0(G)$ solves $-\Delta u_G^f=f$ weakly and is extended by zero. In particular,
\begin{equation}\label{eq:AKN}
 \|w_U-w_{B_1(x_U)}\|_{L^2(\R^d)}^2\le C_d(\mathcal E(U)-\mathcal E(B))\,.
\end{equation}
Moreover, there exists $\eta_d>0$ such that, if $U,V$ are bounded open sets of volume $\om$ and $|U\sym B_1(z)|+|V\sym B_1(z)|\le\eta_d\om$ for some $z\in \R^d$, then
\begin{equation}\label{eq:center}
 |x_U-x_V|\le C_d|U\sym V|\,.
\end{equation}
For every $z\in\R^d$, $x_{B_1(z)}=z$.
\end{lemma}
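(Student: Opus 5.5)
This lemma collects results of Allen--Kriventsov--Neumayer~\cite{AKN}, so the plan is to match our normalizations to theirs and derive each item from the cited statements. The only genuine arguments are the passage from \eqref{eq:resolvent} to \eqref{eq:AKN} and the last two claims about $x_U$.

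\emph{Step 1: the resolvent estimate.} In~\cite[Sections 5.1--5.2]{AKN} the truncated barycenter $x_U$ is defined as the barycenter of a smooth truncation of the mass distribution of $U$. A natural choice is $x_U$ satisfying $\int_U\psi(x-x_U)\,(x-x_U)\dd=0$ for a fixed radial cutoff $\psi$, obtained by a fixed-point or implicit-function argument when $U$ is close to a unit ball. Their Lemma~3.1 and estimate (3.3) bound $\|u_U^f-u_{B_1(x_U)}^f\|_{L^2}^2$ by the torsion energy gap $\mathcal E(U)-\mathcal E(B)$, uniformly over data $0\le f\le1$. I would check two things. First, their energy is normalized as our $\mathcal E=-\tfrac12T$. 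Second, their volume constraint is $|U|=\om$. If the deficit is large, say larger than a dimensional constant, I would use the trivial bound: by the weak maximum principle $0\le u^f_G\le w_G$, so both terms have $L^2$ norm bounded by $\|w_B\|_{L^2}$ plus $\|w_U\|_{L^2}$, and $\|w_U\|_{L^2}$ is dimensionally bounded by Saint--Venant \eqref{eq:SV} and Talenti comparison. Thus \eqref{eq:resolvent} only needs proof for small deficit, which is the regime treated in~\cite{AKN}.

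\emph{Step 2: from \eqref{eq:resolvent} to \eqref{eq:AKN}.} Since $U$ is bounded, both $U$ and $B_1(x_U)$ lie in some ball $B_R$. I would take $f_n\in C_c^\infty$ with $0\le f_n\le1$ and $f_n\uparrow\chi_{B_R}$. For any bounded $G\subset B_R$, the functions $u_G^{f_n}$ converge in $H^1_0(G)$ to $u_G^{\chi_{B_R}}=w_G$, because the right-hand sides converge in $L^2(G)$ and the solution operator is bounded. Passing to the limit in \eqref{eq:resolvent} gives \eqref{eq:AKN} with the same constant.

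\emph{Step 3: the two claims about $x_U$.} For $x_{B_1(z)}=z$, I would use that the truncation is radial about the candidate center, so symmetry of the ball about $z$ forces the defining equation to hold at $z$; uniqueness in the near-ball regime then identifies $x_{B_1(z)}=z$. For \eqref{eq:center}, I expect this to be the main obstacle. The defining map $F_U(y)=\int_U\psi(x-y)(x-y)\dd$ satisfies two properties.
\begin{itemize}[label=$\circ$]
\item It depends on $U$ Lipschitz-continuously in $L^1$, namely $|F_U(y)-F_V(y)|\le C|U\sym V|$.
\item Its differential $D_yF_U$ is uniformly invertible when $|U\sym B_1(z)|\le\eta_d\om$, since at $U=B_1(z)$ it is a nonzero multiple of the identity, and small symmetric differences perturb it by at most $C\eta_d$.
\end{itemize}
A quantitative inverse function argument then gives $|x_U-x_V|\le C_d|U\sym V|$. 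The delicate point is choosing $\eta_d$ small enough that both centers stay in the neighborhood where $D_yF$ is invertible.
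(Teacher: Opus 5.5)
Your plan is essentially the paper's: the paper gives no separate argument and imports the whole lemma from \cite[Sections~5.1--5.2, Lemma~3.1, (3.3)]{AKN}. In the paper, \eqref{eq:AKN} is just the case of \eqref{eq:resolvent} with a single $f\in C_c^\infty(\R^d)$, $0\le f\le1$, $f\equiv1$ near $\overline U\cup\overline{B_1(x_U)}$, for which $u^f_G=w_G$ exactly, so your limiting argument in Step~2 is correct but not needed. The one thing you must secure is that the zero of $F_U(y)=\int_U\psi(x-y)(x-y)\dd$ in Step~3 is literally the point for which \cite{AKN} prove the resolvent bound, since the lemma needs a single $x_U$ satisfying \eqref{eq:resolvent}, \eqref{eq:center} and $x_{B_1(z)}=z$ at once; granted that, your inverse-function argument is sound provided $\psi$ is radial, smooth, compactly supported and $\psi(e_1)\neq0$, because then $D_yF_{B_1(z)}(z)=-\om\,\psi(e_1)\,I$.
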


The following lemma shows that a set with small torsion deficit contains a bounded subset obtained by removing only a small amount of volume. This subset is close to a ball centered at its barycenter, and its torsion function satisfies both an interior gradient estimate and a full-space $L^2$ estimate relative to that ball.

\begin{samepage}
\begin{lemma}\label{lem:prepare}
There are $\eta_d,C_d,M_d>0$ with the following property. If $E$ is open, $|E|=\om$, and $0\le D(E)\le\eta_d$, then there exists a bounded open subset $F\subset E$. Let $r^d=|F|/\om$ and $c=|F|^{-1}\int_Fx\dd$. Then
\begin{align}
 |E\setminus F|+|1-r|&\le C_dD(E),\qquad F\subset B_{M_d}(c),\label{eq:cutmass}\\
 |F\sym B_r(c)|&\le C_d\sqrt{D(E)},\label{eq:cutshape}\\
 \int_F\left|\nabla w_F+\frac{x-c}{d}\right|^2\dd
 +\|w_F-w_{B_r(c)}\|_{L^2(\R^d)}^2&\le C_dD(E)\,.\label{eq:cutenergy}
\end{align}
The actual subset $F$ is retained in these statements; it is not replaced by a dilation when used alongside another set.
\end{lemma}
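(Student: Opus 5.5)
The plan is to combine the cutoff construction of Brasco--De Philippis--Velichkov with the quantitative estimates collected in Lemmas~\ref{lem:BDPV} and~\ref{lem:AKN}.

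\textbf{Step 1: choice of $F$.} Following~\cite{BDPV}, take $F=\{w_E>t\}$, possibly intersected with a large ball around a suitable centre. The level $t$ is chosen by a coarea/energy argument so that $T(E)-T(F)$ and $|E\setminus F|$ are both $\le C_dD(E)$. The inequality $\mathcal A(E)^4\le C_dD(E)$ from~\eqref{eq:preliminaryA} shows that most of $E$ sits near one ball. The standard $L^\infty$ and decay bounds for $w_E$ then give $F\subset B_{M_d}(c)$ once $D(E)\le\eta_d$. Because $|E\setminus F|\le C_dD(E)$ and $|E|=\om$, we get $|1-r|\le C_dD(E)$, which proves~\eqref{eq:cutmass}. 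We also check that $D(F)\le C_dD(E)$. This holds because $\mathcal E$ changes by $O(D(E))$, and the normalizing volume factor also changes by $O(D(E))$.

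\textbf{Step 2: energy estimate~\eqref{eq:cutenergy}.} Rescale $F$ to $U=r^{-1}F$, so that $|U|=\om$. For this $U$ we have $\mathcal E(U)-\mathcal E(B)=\om^{(d+2)/d}D(U)\le C_dD(E)$.
\begin{itemize}
\item \emph{Gradient term.} Expand $\int_U|\nabla w_U+(x-b)/d|^2$. Using $\int|\nabla w_U|^2=T(U)$ and integration by parts, $\int\nabla w_U\cdot(x-b)=d\,T(U)/d$ up to sign. Together with $\int_U|x-b|^2\ge\int_B|x|^2$ (the bathtub principle), this expression reduces to $(T(B)-T(U))$ plus $d^{-2}\big(\int_U|x-b_U|^2-\int_B|x|^2\big)$. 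The second bracket is comparable to $\alpha(U)$, and~\eqref{eq:alpha} bounds $\alpha(U)$ by $C\,(\mathcal E(U)-\mathcal E(B))$. The hypothesis $\alpha(U)\le\eta$ needed for~\eqref{eq:alpha} follows from Lipschitz continuity~\eqref{eq:alphaLip} and the smallness of the asymmetry.
\item \emph{$L^2$ term.} Lemma~\ref{lem:AKN} gives $\|w_U-w_{B_1(x_U)}\|_{L^2}^2\le C_dD(E)$. The centre must be switched from $x_U$ to $b_U$. For this, first show $|x_U-b_U|\lesssim\sqrt{D}$ using~\eqref{eq:center}, or by comparing barycentres through $w$. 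Then $\|w_{B_1(x_U)}-w_{B_1(b_U)}\|_{L^2}^2\approx|x_U-b_U|^2$.
\end{itemize}

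\textbf{Main obstacle.} The centre switch requires $|x_U-b_U|^2\lesssim D$, not merely $|x_U-b_U|\lesssim\sqrt D$. I would get this from the $L^2$ closeness itself. Set $g=w_U-w_{B_1(x_U)}$. From $-\Delta w=1$ we have $\int w\,(x-x_U)$ for the ball equal to $0$, while $\int_U w_U(x-b_U)$ can be controlled by the gradient identity. Testing $g$ against $x-x_U$ then yields $|x_U-b_U|\lesssim\|g\|_{L^2}+\alpha$-type terms, and these are $O(\sqrt D)$ in the squared sense.

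\textbf{Step 3: rescaling back.} Undo the dilation. Since $|1-r|\le C_dD$, the factors $r^{d+2}$ perturb the estimates by only $O(D)$, which gives~\eqref{eq:cutenergy} for $F$ with $c=rb_U$. Finally,~\eqref{eq:cutshape} follows from~\eqref{eq:alphashape} and~\eqref{eq:alpha}: $|U\sym B_1(b_U)|^2\le C\alpha(U)\le CD$.
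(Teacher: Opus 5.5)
Your proposal has a genuine gap in Step~1, and the missing ingredient is the paper's superlinear cut-off iteration.

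\textbf{The gap: boundedness with $O(D)$ mass loss.} Step~1 asserts, rather than proves, the real content of the lemma: a subset $F$ inside a ball of \emph{dimensional} radius with $|E\setminus F|\le C_dD$ and $T(E)-T(F)\le C_dD$.

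The level-set half is actually easy, and no coarea argument is needed. Since $w_{\{w_E>t\}}=(w_E-t)_+$, one has $T(E)-T(\{w_E>t\})=\int_E\min\{w_E,t\}\dd\le t\om$. Applying \eqref{eq:SV} to $\{w_E>t\}$ then gives $|E\setminus\{w_E>t\}|\le C_d(D+t)$, so $t=D$ works.

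What fails is boundedness: no ``decay bound'' for $w_E$ confines $\{w_E>D\}$ to a fixed ball. Let $E$ be a ball of volume $(1-\mu)\om$ together with a ball of volume $\mu\om$ at distance $L$. Then $D(E)\asymp\mu$, but for $d\ge3$ one has $w_E\asymp\mu^{2/d}\gg D$ near the centre of the far ball, and $L$ is arbitrary.

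Raising $t$ until the subsolution bound $w_E(x)\lesssim\rho^{-d}|E\cap B_\rho(x)|+\rho^2$ excludes far points does not help either. After centring at an optimal ball, the only a priori tail bound is $|E\setminus B_R|\lesssim D^{1/4}$ from \eqref{eq:preliminaryA}. This forces $t\gg D$, and such a level discards a boundary layer of measure of order $t$ (already $|\{0<w_B\le t\}|\asymp t$ for the ball).

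So you must cut with a ball, and then the torsion function is no longer $(w_E-t)_+$. You would have to show that the mass outside a fixed ball is $O(D)$, not merely $O(D^{1/4})$, and that the cut costs only $O(D)$ in energy. Nothing in your sketch does this.

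\textbf{The missing idea.} The paper's proof rests on a \emph{superlinear} cut-off estimate. With $b_k=|E\setminus B_k|/\om$, one has $\mathcal E(E\cap B_{k+2})\le\mathcal E(E)+C_db_k^{1+1/d}$. Combined with the same Saint--Venant mass argument as above, this gives $b_{k+2}\le C_d(D+b_k^{1+1/d})$; see \eqref{eq:cutrecurrence}. Start from $b_1\lesssim D^{1/4}$ and iterate $j(d)$ times with $(1+1/d)^j\ge4$; this yields $b_{1+2j}\lesssim D$. Then $F=E\cap B_{3+2j}$ satisfies $\mathcal E(F)\le\mathcal E(E)+C_dD^{1+1/d}$ and lies trivially in some $B_{M_d}(c)$.

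This boundedness is also what makes your Step~2 legitimate. The constants in \eqref{eq:alphaLip}--\eqref{eq:alpha} are dimensional only for sets contained in some $B_M(z)$ with $M$ fixed. Bounding $\int_U|x-b|^2\dd-\int_{B_1(b)}|x-b|^2\dd$ by $C_d\alpha(U)$ also uses the diameter bound.

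\textbf{The centre switch is not an obstacle.} The bounds $|x_U-b_U|\lesssim\sqrt D$ and $|x_U-b_U|^2\lesssim D$ are the same statement. Applying \eqref{eq:center} to $U$ and $B_1(b_U)$, together with $\|v(\cdot-h)-v\|_2\le|h|\,\|\nabla v\|_2$, already closes the $L^2$ term, exactly as in the paper. The extra testing argument against $x-x_U$ is unnecessary.
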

\end{samepage}
\begin{proof}
If $D(E)=0$, the equality case of the Saint--Venant inequality in~\eqref{eq:SV} gives a ball up to a set of Sobolev capacity zero, with the same torsion function. Openness excludes points of $E$ outside that ball, so $E$ is bounded. With $F=E$ and $r=1$, the ball formula~\eqref{eq:balltorsion} gives~\eqref{eq:cutmass}--\eqref{eq:cutenergy}. Assume $D=D(E)>0$.

Translate $E$ so that a minimizing comparison unit ball is centered at the origin, and put $b_k=|E\setminus B_k|/\om$. By~\eqref{eq:preliminaryA}, decreasing $\eta_d$ ensures $b_1\le1/2$. From the proof of~\cite[Lemma 5.3, (5.8), (5.11), (5.12)]{BDPV}, we obtain, uniformly for integers $k\ge1$,
\begin{equation}\label{eq:cutrecurrence}
 b_1\le C_dD^{1/4},\quad b_{k+2}\le C_d(D+b_k^{1+1/d}),\quad
 \mathcal E(E\cap B_{k+2})\le\mathcal E(E)+C_db_k^{1+1/d}\,.
\end{equation}

A number of iterations depending only on the dimension reduces the mass to $O_d(D)$. Specifically, let $\gamma=1+1/d$. By induction in~\eqref{eq:cutrecurrence} and using $D\le1$, we can obtain
\begin{equation*}
 b_{1+2j}\le C_{d,j}(D+D^{\gamma^j/4})\,.
\end{equation*}
Fix $j=j(d)$ with $\gamma^j\ge4$ and let $k=1+2j$. Then $F=E\cap B_{k+2}$ has $|E\setminus F|\le C_dD$, and
\begin{equation*}
 \mathcal E(F)\le\mathcal E(E)+C_dD^{1+1/d}\,.
\end{equation*}
Normalize this bounded subset for the stability estimates: put $r^d=|F|/\om$ and $U=r^{-1}F$. Then $r=1+O_d(D)$, $|U|=\om$, $\diam U\le M_d$, and $D(U)\le C_dD$ by the scaling of $\mathcal E$.

We check the smallness hypothesis in~\eqref{eq:alpha}. By~\eqref{eq:preliminaryA}, there is a unit ball $B_1(z)$ with $|U\sym B_1(z)|\le C_dD(U)^{1/4}$. Once $D$ is small, this ball intersects $U$, so both lie in a fixed dimensional ball after translation. By~\eqref{eq:alphaLip} and $\alpha(B_1(z))=0$, we have $\alpha(U)\le C_dD(U)^{1/4}$. Thus~\eqref{eq:alpha} applies, and
\begin{equation}\label{eq:preparedalpha}
 \alpha(U)\le C_dD\,.
\end{equation}
By~\eqref{eq:alphashape} and~\eqref{eq:preparedalpha}, the ordinary barycenter $b=b_U$ satisfies
\begin{equation}\label{add}
 |U\sym B_1(b)|\le C_d\sqrt D\,.
\end{equation}

To estimate the interior gradient defect, we use $w_U\in H^1_0(U)$ to integrate against $x-b$. This gives $\int_U(x-b)\cdot\nabla w_U=-dT(U)$. Expanding the square and using $T(B)=d^{-2}\int_B|x|^2\dd$ yields
\begin{align}\label{eq:momentidentity}
 \int_U\left|\nabla w_U+\frac{x-b}{d}\right|^2\dd
 &=T(B)-T(U)
 +\frac1{d^2}\left(\int_U|x-b|^2\dd-\int_{B_1(b)}|x-b|^2\dd\right).
\end{align}
Equal volumes express the last difference as $\int_{U\sym B_1(b)}\big||x-b|^2-1\big|\dd$. Since $b$ lies in the convex hull of $U$, the diameter bound controls $|x-b|+1$ on $U\sym B_1(b)$. Therefore, the last difference is bounded by $C_d\alpha(U)$. Using~\eqref{eq:preparedalpha} and $D(U)\le C_dD$, both terms on the right of~\eqref{eq:momentidentity} are bounded by $C_dD$.

Next we decrease $\eta_d$ if necessary so that the preceding symmetric-difference bound \eqref{add} meets the smallness requirement in~\eqref{eq:center}. Applying that estimate to $U$ and $B_1(b)$ gives $|x_U-b|\le C_d\sqrt D$. Thanks to the translation inequality
\begin{equation*}
 \|v(\cdot-h)-v\|_2\le |h|\|\nabla v\|_2\qquad(v\in H^1(\R^d)),
\end{equation*}
which follows by integrating directional derivatives, we can deduce from \eqref{eq:AKN} that $\|w_U-w_{B_1(b)}\|_2^2\le C_dD$.

Under the dilation back to $F$, the torsion is multiplied by $r^2$, the gradient by $r$, and volume by $r^d$. As $r=1+O_d(D)$ and the image of $b$ is exactly $c$, we conclude the proof of~\eqref{eq:cutmass}--\eqref{eq:cutenergy}.
\end{proof}

The next proposition bounds the overlap of the comparison balls for two disjoint sets.

\begin{samepage}
\begin{proposition}\label{prop:contact}
Let $F_1,F_2\subset\R^d$ be disjoint bounded open sets. Suppose $w_i=w_{F_i}$, $c_i\in\R^d$, and $s\ge0$ satisfy
\begin{equation}\label{eq:contacthyp}
 \sum_{i=1}^2\left[
 \int_{F_i}\left|\nabla w_i+\frac{x-c_i}{d}\right|^2\dd
 +\|w_i-w_{B_1(c_i)}\|_2^2\right]\le s\,.
\end{equation}
If $0<\tau=2-|c_1-c_2|\le1/4$, then $s\ge c_d\Phi_d(\tau)$.
\end{proposition}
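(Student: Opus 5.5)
The plan is to turn the disjointness of $F_1,F_2$ into a capacity lower bound for the lens $B_1(c_1)\cap B_1(c_2)$. Heuristically, $\Phi_d(\tau)$ is $\tau^2$ times the capacity of a $(d-1)$-disc of radius $\sqrt\tau$: this is $\tau^{(d-2)/2}$ when $d\ge3$ and $1/\log(e/\tau)$ when $d=2$. The proof should show that some $H^1$ function controlled by $s$ must be of size $\tau$ on such a disc.

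\textbf{Setup.} Put $B_i=B_1(c_i)$ and $\beta_i=w_{B_i}$ as in \eqref{eq:balltorsion}. Let $p$ be the midpoint of $c_1,c_2$ and $e$ the unit vector along $c_2-c_1$. Let $K$ be the set of points $x$ with $|(x-p)\cdot e|\le\tau/8$ and with $|x-p-((x-p)\cdot e)e|\le\sqrt\tau/4$. An elementary computation shows $K\subset B_1\cap B_2$, that $\min(\beta_1,\beta_2)\ge c_d\tau$ on $K$, and that $K$ is comparable to a flat disc of radius $\sqrt\tau$.

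Since $F_1\cap F_2=\varnothing$, at every point at least one of $w_1,w_2$ vanishes. Hence the auxiliary function
\begin{equation*}
 g=(\beta_1-w_1)_++(\beta_2-w_2)_+
\end{equation*}
satisfies $g\ge\min(\beta_1,\beta_2)\ge c_d\tau$ on $K$. By \eqref{eq:contacthyp}, $\|g\|_2^2\le 2s$.

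\textbf{Truncation.} Set $\psi=\min\{g/(c_d\tau),1\}$. Then $\psi=1$ on $K$ and $\|\psi\|_2^2\le Cs/\tau^2$.

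\textbf{Gradient of $\psi$.} The gradient is nonzero only where $g<c_d\tau$, and this region splits into three parts.
\begin{itemize}
 \item On $F_i\cap B_i$, the gradient of $\beta_i-w_i$ equals $-(\nabla w_i+(x-c_i)/d)$, whose $L^2$ norm squared is at most $s$.
 \item On $B_i\setminus F_i$ we have $\beta_i-w_i=\beta_i$, so truncation at level $c_d\tau$ confines the gradient to the shell $\{\beta_i<c_d\tau\}$. There $|\nabla\beta_i|\le1/d$ and $\beta_i$ is close to $0$.
 \item On $F_i\setminus B_i$, the positive part vanishes, so there is no contribution.
\end{itemize}
The dangerous term is the second one: $(B_i\setminus F_i)\cap\{\beta_i<c_d\tau\}$, intersected with a neighbourhood of $K$. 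This is the step I expect to be the main obstacle. I would handle it by noting that the $\psi$ I actually need only has to be a competitor for the capacity of $K$ relative to a ball $B_R(p)$ of fixed radius. On the shell, $\psi$ is bounded by $\beta_i/(c_d\tau)$, which vanishes on $\partial B_i$, so the shell acts like the boundary of the lens and does not help $\psi$ escape from $1$.

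If this is awkward, the fallback is to replace $\beta_i$ in $g$ by a function vanishing on a slightly shrunken ball $B_{1-\tau/16}(c_i)$ that still exceeds $c\tau$ on $K$. With that choice the shell contribution is $\int_{F_i}|\nabla w_i+(x-c_i)/d|^2$ plus the $L^2$ term, the latter via Chebyshev applied to $\|w_i-\beta_i\|_2^2$ on the set where $\beta_i\ge\tau/16$.

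\textbf{Capacity bound.} Fix $R$ small but dimensional and let $\chi$ be a cutoff equal to $1$ on $B_{R}(p)$ and supported in $B_{2R}(p)$. Then $\chi\psi\in H^1_0(B_{2R}(p))$ equals $1$ on $K$, and
\begin{equation*}
 \int|\nabla(\chi\psi)|^2\le C\Big(\int_{B_{2R}(p)}|\nabla\psi|^2+\|\psi\|_2^2\Big)\le C\big(s/\tau^2+s\big).
\end{equation*}
Here the pure-gradient part is $O(s)$ before normalisation, which becomes $O(s/\tau^2)$ after dividing by $\tau^2$. On the other hand, $\int|\nabla(\chi\psi)|^2\ge\operatorname{cap}(K;B_{2R}(p))$. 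By comparison with a flat disc of radius $\sqrt\tau$, this capacity is at least $c_d\tau^{(d-2)/2}$ for $d\ge3$, and at least $c/\log(e/\tau)$ for $d=2$, the latter being the logarithmic capacity of a segment of length $\sqrt\tau$ in a fixed disc.

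\textbf{Conclusion.} Combining the two bounds gives $s/\tau^2\ge c_d\tau^{(d-2)/2}$, that is $s\ge c_d\tau^{(d+2)/2}$, for $d\ge3$. For $d=2$ it gives $s\ge c\,\tau^2/\log(e/\tau)$, which is $s\ge c_d\Phi_d(\tau)$. The capacity lower bound itself can be proved elementarily, consistent with the "averaging estimates and Poincar\'e" mentioned in the outline. Average $\chi\psi$ over spheres or dyadic annuli centred on $K$ at scales between $\sqrt\tau$ and $R$. At scale $\sqrt\tau$ the average is $\gtrsim1$ by Poincar\'e on the ball around $K$, and at scale $R$ it is $\lesssim(s/\tau^2)^{1/2}$. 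Summing the Poincar\'e bounds for the jumps between consecutive scales, then applying Cauchy--Schwarz over the $\log(1/\tau)$ scales, produces exactly the factor $\tau^{(d-2)/2}$ or $1/\log(e/\tau)$.
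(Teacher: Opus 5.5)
The gap is the shell term in your gradient estimate, which you correctly flag, and neither of your two suggestions closes it. The heuristic that the shell ``does not help $\psi$ escape from $1$'' is backwards: wherever $\psi$ is driven from $1$ toward $0$ across $\partial B_i$ near $K$, it pays energy that \eqref{eq:contacthyp} does not control. Concretely, take $F_{1,2}=B_1(c_{1,2})\cap\{\mp x_1>0\}$ with $c_{1,2}=\mp(1-\tau/2)e_1$, the truncated balls used for Theorem~\ref{thm:sharp} when $d\ge3$. A direct competitor for the harmonic defects $\beta_i-w_i$ gives $s\lesssim\tau^{(d+2)/2}$. The cap $B_1(c_2)\cap\{x_1<0\}$ lies in $F_1\setminus F_2$, and there $g=\beta_2+(\beta_1-w_1)_+\le\beta_1+\beta_2$. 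This is below $c_d\tau$ on the outer part of the cap, near the rim of the lens and within $O(\sqrt\tau)$ of $K$, so $\psi=g/(c_d\tau)$ is unsaturated there. Take any $e_1$-segment crossing that part; it has length $\ell\sim\tau$. Comparing $w_1$ with an interior ball of fixed radius tangent to $\{x_1=0\}$ shows that $g$ changes by at least $c\ell$ along it. These segments fill a set of $(d-1)$-measure $\sim\tau^{(d-1)/2}$, so $\int|\nabla\psi|^2\gtrsim\tau^{(d-3)/2}$, whereas you need $O(s/\tau^2)=O(\tau^{(d-2)/2})$. The bound you want for $\chi\psi$ is therefore false, not merely unproven.

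Your fallback has the same defect in quantitative form. The new transition band must lie between $K$ and $\partial B_{1-\tau/16}(c_i)$, so it has width $O(\tau)$ and $|\nabla\psi|\sim1/\tau$ on it. Chebyshev bounds the measure of its part outside $F_i$ only by $s/\tau^2$. Hence this term contributes $s/\tau^4$, not $s/\tau^2$, and the argument yields only $s\gtrsim\tau^{(d+6)/2}$.

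The missing idea is to truncate each piece \emph{from above}, at a level below $\inf\beta_i$ on a shrunken ball, and then \emph{extend} instead of cutting off. This is what the paper does. Set $D_i=B_{1-\tau/8}(c_i)$ and $t=\tau/(32d)$; then $\beta_i>3t$ on $D_i$. Define $\varphi_i=\min\{(\beta_i-w_i)_+,t\}$ on $D_i$. Then $\varphi_i\equiv t$ on $D_i\setminus F_i$, and $\nabla\varphi_i$ is supported where $w_i>\beta_i-t>0$, i.e.\ inside $F_i$. Hence $\|\varphi_i\|_{H^1(D_i)}^2$ is bounded by the $i$-th summand of \eqref{eq:contacthyp}. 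A uniformly bounded extension operator for the ball $D_i$ then gives $\widetilde\varphi_i\in H^1(\R^d)$ with no transition layer at all. By disjointness, $v=\widetilde\varphi_1+\widetilde\varphi_2\ge t$ a.e.\ on $D_1\cap D_2\supset K$, and $\|v\|_{H^1}^2\lesssim s$.

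From here your capacity step works with $\chi\min\{v/t,1\}$. Quoting the capacity of a flat $(d-1)$-disc of radius $\sim\sqrt\tau$ (a segment when $d=2$) is a legitimate substitute for the paper's chain $v_K\to v_Q\to v_{B_{2a}}\to v_{B_1}$ via the field $X_r$. Your ``elementary'' sketch of that capacity bound is wrong at its first step, though. $K$ has thickness $\sim\tau$ inside a ball of radius $\sim\sqrt\tau$, so Poincar\'e on that ball loses a factor $\tau^{-1/4}$. You need the one-dimensional estimate across the thin direction, as in the paper's comparison of $v_K$ with $v_Q$.
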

\end{samepage}
\begin{proof}
Put $R=1-\tau/8$, $D_i=B_R(c_i)$, $p_i(x)=(1-|x-c_i|^2)/(2d)$, and $t=\tau/(32d)$. Since $\inf_{D_i}p_i=(16\tau-\tau^2)/(128d)>3t$, define
\begin{equation}\label{eq:saturation}
 \varphi_i=\min\{(p_i-w_i)_+,t\}\quad\hbox{on }D_i\,.
\end{equation}
The zero extensions $w_i$ belong to $H^1(\R^d)$, and the weak chain rule gives
\begin{equation*}
 \nabla\varphi_i=\mathbf1_{\{0<p_i-w_i<t\}}(\nabla p_i-\nabla w_i)
 \quad\hbox{almost everywhere.}
\end{equation*}
On the set $\{0<p_i-w_i<t\}$, $w_i>p_i-t>2t>0$, so this set lies in $F_i$ up to a null set. Moreover, $0\le\varphi_i\le|p_i-w_i|$ and $p_i=w_{B_1(c_i)}$ on $D_i$. Hence
\begin{equation}\label{eq:satH1}
 \begin{aligned}
 \int_{D_i}|\nabla\varphi_i|^2\dd
 &\le\int_{F_i}\left|\nabla w_i+\frac{x-c_i}{d}\right|^2\dd,\\
 \int_{D_i}\varphi_i^2\dd&\le\|w_i-w_{B_1(c_i)}\|_2^2\,.
 \end{aligned}
\end{equation}
From~\eqref{eq:satH1} and the Sobolev extension theorem \cite[Theorem 9.7]{Brezis}, we can obtain extensions $\widetilde\varphi_i\in H^1(\R^d)$ with $\sum_i\|\widetilde\varphi_i\|_{H^1(\R^d)}^2\le C_ds$. Thus $v=\widetilde\varphi_1+\widetilde\varphi_2$ satisfies $\|v\|_{H^1(\R^d)}^2\le C_ds$. Disjointness of $F_1$ and $F_2$ gives $\min(w_1,w_2)=0$ almost everywhere. Since $p_i>3t$, at least one $\varphi_i$ equals $t$ at almost every point of $D_1\cap D_2$, and therefore
\begin{equation}\label{eq:obstacle}
 v\ge t\quad\hbox{almost everywhere on }D_1\cap D_2\,.
\end{equation}

After a rigid motion, we may take $c_{1,2}=\mp(1-\tau/2)e_1$. Set $a=\sqrt\tau/4$, $\delta=\tau/8$, and
\begin{equation*}
 K=(-\delta,\delta)\times B_a^{d-1},\qquad
 Q=(-a,a)\times B_a^{d-1}\,.
\end{equation*}
Since $\delta/a=\sqrt\tau/2\le1/4$, we have $K\subset Q$. Moreover $K\subset D_1\cap D_2$, because
\begin{equation*}
 (1-\tau/8)^2-\bigl[(1-3\tau/8)^2+\tau/16\bigr]
 =7\tau/16-\tau^2/8>0\,.
\end{equation*}
Write $v_A=|A|^{-1}\int_Av\dd$ for the average over a measurable set $A$ of positive finite measure. The function $h(z)=|B_a^{d-1}|^{-1}\int_{B_a^{d-1}}v(z,y)\,dy$ belongs to $H^1(-a,a)$. The fundamental theorem of calculus and the Cauchy--Schwarz inequality give
\begin{equation*}
 |v_K-v_Q|\le\int_{-a}^a|h'(z)|\,dz
 \le\frac{(2a)^{1/2}}{|B_a^{d-1}|^{1/2}}\|\partial_1v\|_{L^2(Q)}
 \le C_da^{(2-d)/2}\|\nabla v\|_2\,.
\end{equation*}
This estimate is uniform in $0<\delta\le a$. Since $Q\subset B_{2a}$ and $|Q|\asymp_d a^d$, the Poincar\'e inequality on $B_{2a}$ also gives $|v_Q-v_{B_{2a}}|\le C_da^{(2-d)/2}\|\nabla v\|_2$. Consequently, by~\eqref{eq:obstacle},
\begin{equation}\label{eq:overlap_average}
 t\le v_K\le |v_{B_{2a}}|+C_da^{(2-d)/2}\|\nabla v\|_2\,.
\end{equation}

For $0<r<1$, we define the vector field
\begin{equation*}
 X_r(x)=\frac{x}{d\om}
 \begin{cases}
 r^{-d}-1,&|x|\le r,\\
 |x|^{-d}-1,&r<|x|<1,\\
 0,&|x|\ge1.
 \end{cases}
\end{equation*}
It is straightforward to compute that
\begin{equation*}
 \operatorname{div}X_r=\frac{\mathbf1_{B_r}}{|B_r|}-\frac{\mathbf1_{B_1}}{|B_1|},
 \qquad
 \|X_r\|_2^2\le C_d\left(r^{2-d}+\int_r^1\rho^{1-d}\,d\rho\right)
 \le C_d\begin{cases}\log(e/r),&d=2,\\r^{2-d},&d\ge3,\end{cases}
\end{equation*}
and $v_{B_r}-v_{B_1}=-\int_{\R^d}X_r\cdot\nabla v\dd$. Since $|v_{B_1}|\le C_d\|v\|_2$, it follows that
\begin{equation}\label{eq:ball_average}
 |v_{B_r}|\le C_d\|v\|_{H^1}
 \begin{cases}\sqrt{\log(e/r)},&d=2,\\r^{(2-d)/2},&d\ge3.\end{cases}
\end{equation}
Take $r=2a$. Combining~\eqref{eq:overlap_average} and~\eqref{eq:ball_average} with $\|v\|_{H^1}^2\le C_ds$ gives $s\ge c_dt^2a^{d-2}=c_d\tau^{(d+2)/2}$ for $d\ge3$. For $d=2$, it gives $s\ge c_dt^2/\log(e/(2a))\ge c_d\tau^2/\log(e/\tau)$, since $2a=\sqrt\tau/2\ge\tau$ for $\tau\le1/4$.
\end{proof}

In the planar construction in the proof of Theorem \ref{thm:sharp}, we will use the following Rayleigh estimate for Lipschitz deformations of the unit ball $B$.

\begin{samepage}
\begin{lemma}\label{lem:rayleigh}
Let $V\in W^{1,\infty}(B;\R^d)$ and $F=I+V$. If $\|DV\|_\infty\le\eta_d$ for a sufficiently small dimensional $\eta_d>0$, then $F$ is bi-Lipschitz and
\begin{equation}\label{eq:rayleigh}
 0\le |F(B)|^{2/d}\lambda_1(F(B))-\om^{2/d}\lam
 \le C_d\int_B|DV|^2\dd\,.
\end{equation}
\end{lemma}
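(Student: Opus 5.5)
The plan is to use the first ball eigenfunction $u=u_B$ (normalized, $\|u\|_{L^2(B)}=1$), transplant it by the map $F$, and compare Rayleigh quotients.

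\emph{Bi-Lipschitz property.} For $x,y\in B$ we have $|V(x)-V(y)|\le\eta_d|x-y|$, because $B$ is convex. Hence
\[
(1-\eta_d)|x-y|\le|F(x)-F(y)|\le(1+\eta_d)|x-y|,
\]
so $F$ is a bi-Lipschitz homeomorphism onto its image. Invariance of domain, or extending $F$ to $\R^d$ as a bi-Lipschitz map, then shows that $F(B)$ is open. The lower bound in \eqref{eq:rayleigh} is just the Faber--Krahn inequality \eqref{eq:classicalFK}.

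\emph{Upper bound.} Set $\tilde u=u\circ F^{-1}\in H^1_0(F(B))$ and $J=\det DF=\det(I+DV)$. By the change of variables formula,
\[
\lambda_1(F(B))\le\frac{\int_B|DF^{-T}\nabla u|^2J\dd}{\int_B u^2J\dd},\qquad |F(B)|=\int_BJ\dd.
\]
Write $A=DV$. Expanding in $A$ gives
\[
J=1+\operatorname{tr}A+O(|A|^2),\qquad DF^{-T}=I-A^T+O(|A|^2),
\]
with constants depending only on $d$, since $|A|\le\eta_d$ pointwise. So the quantity $G(V)=|F(B)|^{2/d}\lambda_1(F(B))$ is bounded by a functional of the form $G_0+L(A)+O(\int|A|^2)$, where:
\begin{itemize}
\item $G_0=\om^{2/d}\lam$;
\item $L$ is linear in $A$, with coefficients built from $u^2$, $|\nabla u|^2$ and $\nabla u\otimes\nabla u$, all bounded on $B$;
\item the quotient is expanded using that the denominators are close to their unperturbed values.
\end{itemize}
Every quadratic remainder is controlled pointwise by $|A|^2$ times a bounded weight, which gives $C_d\int_B|DV|^2$. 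This uses that $\nabla u$ is bounded on $\overline B$, since $u$ is smooth up to the boundary.

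\emph{The main obstacle is the linear term $L(A)$.} It must vanish, because otherwise we would only get a bound linear in $\|DV\|$. I would compute it explicitly:
\[
L(A)=\om^{2/d}\Big[\tfrac{2\lam}{d\om}\!\int_B\!\operatorname{tr}A
+\!\int_B\!\big(|\nabla u|^2\operatorname{tr}A-2\nabla u\cdot A^T\nabla u\big)
-\lam\!\int_B\! u^2\operatorname{tr}A\Big].
\]
This is the shape derivative of the scale-invariant functional at the ball, for the perturbation field $V$. Its vanishing should follow from the ball being a critical point. Concretely, I would verify it via the Rellich/Pohozaev identity. Integrating by parts (legitimate for $V\in W^{1,\infty}$ and $u$ smooth up to the boundary with $u=0$ on $\partial B$), the expression reduces to the boundary integral
\[
-\int_{\partial B}|\partial_\nu u|^2\,V\cdot\nu
\]
plus the volume term $\tfrac{2\lam}{d\om}\int_{\partial B}V\cdot\nu$. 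Because $|\partial_\nu u|^2$ is constant on $\partial B$, with value $2\lam/(d\om)$ by Rellich's identity under the normalization $\|u\|_2=1$, these two terms cancel. Therefore $L(A)=0$, and the upper bound in \eqref{eq:rayleigh} follows with a constant depending only on $d$.

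If a direct proof of the integration by parts at $W^{1,\infty}$ regularity is awkward, I would first prove the identity $L(A)=0$ for smooth $V$, then pass to the limit by density in $W^{1,2}$. This works because $L$ is continuous in $L^1$ norm of $A$ with bounded coefficients.
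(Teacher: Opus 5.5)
Your proposal is correct and is essentially the paper's proof. Both transplant the ball eigenfunction by $F$ and expand $\det(I+A)$ and $(I+A)^{-T}$ to first order. Both then kill the linear term by integrating the divergence-free tensor $(|\nabla u|^2-\lam u^2)I-2\nabla u\otimes\nabla u$ against $DV$, which leaves $-k^2\int_{\partial B}V\cdot\nu$, and use Rellich's identity $k^2=2\lam/(d\om)$ to cancel this against the volume term. The only step left implicit in your sketch is that the product terms from expanding the quotient and the power $|F(B)|^{2/d}$ (such as $m^2$, $mn$, $b^2$) are $O_d(\int_B|DV|^2)$ by Cauchy--Schwarz; they are not pointwise quadratic remainders.
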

\end{samepage}
\begin{proof}
Since \(B\) is convex, the assumption \(\|DV\|_{L^\infty(B)}\le\eta_d\) implies
\[
(1-\eta_d)|x-y|\le |F(x)-F(y)|
\le(1+\eta_d)|x-y|
\]
for all \(x,y\in B\). Thus \(F\) is bi-Lipschitz. Let $u$ be the positive $L^2$-normalized first eigenfunction on $B$. It is smooth and radial, with $\partial_\nu u=-k$ on $\partial B$ for some $k>0$. Put $A=DV$, $Q=\int_B|A|^2\dd$, and $b=\int_B\operatorname{tr}A\dd$. For $\eta_d$ sufficiently small, the pointwise matrix expansions are
\begin{equation*}
 \det(I+A)=1+\operatorname{tr}A+O_d(|A|^2),
 \qquad (I+A)^{-T}=I-A^T+O_d(|A|^2)\,.
\end{equation*}
Use $u\circ F^{-1}\in H^1_0(F(B))$ as a test function. The change of variables $y=F(x)$ gives the Jacobian $\det(I+A)$ and the transformed gradient $(I+A)^{-T}\nabla u$. Hence
\begin{align*}
 |F(B)|&=\om+b+O_d(Q),\\
 \int_{F(B)}(u\circ F^{-1})^2&=1+m+O_d(Q),\\
 \int_{F(B)}|\nabla(u\circ F^{-1})|^2&=\lam+n+O_d(Q),
\end{align*}
where
\begin{equation*}
 m=\int_Bu^2\operatorname{tr}A\dd,
 \qquad n=\int_B\bigl(|\nabla u|^2\operatorname{tr}A
 -2\nabla u\cdot A\nabla u\bigr)\dd\,.
\end{equation*}
By the Cauchy--Schwarz inequality, we have $|b|+|m|+|n|\le C_d\sqrt Q$. Therefore, we get
\begin{align*}
 |F(B)|^{2/d}
 &=\om^{2/d}\left(1+\frac{2b}{d\om}\right)+O_d(Q),\\
 \frac{\int_{F(B)}|\nabla(u\circ F^{-1})|^2}
 {\int_{F(B)}(u\circ F^{-1})^2}
 &=\lam+n-\lam m+O_d(Q)\,,
\end{align*}
and their product is
\begin{equation*}
 \om^{2/d}\lam+\om^{2/d}\left(n-\lam m+\frac{2\lam}{d\om}b\right)+O_d(Q)\,.
\end{equation*}

Set the matrix field $S=(|\nabla u|^2-\lam u^2)I-2\nabla u\otimes\nabla u$. Direct differentiation gives
\begin{equation*}
 \operatorname{div}S=-2(\Delta u+\lam u)\nabla u=0\,.
\end{equation*}
On $\partial B$, we have $u=0$ and $\nabla u=-k\nu$, so $S\nu=-k^2\nu$. Integration by parts thus yields
\begin{equation*}
 n-\lam m=\int_B S:DV\dd
 =\int_{\partial B}(S\nu)\cdot V\,d\sigma
 =-k^2\int_{\partial B}V\cdot\nu\,d\sigma=-k^2b\,.
\end{equation*}
By Rellich's identity~\cite[p.~635, (1)]{Rellich1940}, we also have
\begin{equation*}
 2\lam=\int_{\partial B}(x\cdot\nu)(\partial_\nu u)^2\,d\sigma
 =d\om k^2\,.
\end{equation*}
Consequently, $n-\lam m+2\lam b/(d\om)=0$. Therefore, the Rayleigh quotient of the test function $u\circ F^{-1}\in H^1_0(F(B))$ gives the upper bound in~\eqref{eq:rayleigh}, and the Faber--Krahn inequality~\eqref{eq:classicalFK} gives the lower bound.
\end{proof}
The following lemma uses the nearly tangent two-ball configuration of Brasco and Pratelli~\cite[Example 3.4]{BP}. It shows that the Fraenkel $2$-asymmetry remains comparable to $\varepsilon$ when the balls are replaced by disjoint subsets whose total missing volume is at most a sufficiently small multiple of $\varepsilon$.
\begin{lemma}\label{lem:matching}
Let $d\ge2$. Then there exist $\eps_d,\delta_d,c_d,C_d>0$ with the following property. Let $0<\eps<\eps_d$, set $c_\pm=\pm(1-\eps)e_1$, and let $E_\pm\subset B_1(c_\pm)$ be disjoint open sets. If
\begin{equation*}
 m_\eps=\sum_\pm|B_1(c_\pm)\setminus E_\pm|\le\delta_d\eps,
\end{equation*}
then $c_d\eps\le\A(E_+\cup E_-)\le C_d\eps$.
\end{lemma}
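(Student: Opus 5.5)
The upper bound is a direct comparison with the two balls $B_1(c_\pm)$ themselves. Let $\Omega=E_+\cup E_-$. Since $B_1(c_+)\cap B_1(c_-)$ is a lens of width $2\eps$ and radius about $\sqrt{2\eps}$, its measure is of order $\eps^{(d+1)/2}\le\eps$. Hence $\bigl||\Omega|-2\om\bigr|\le C_d\eps$. Rescale $\Omega$ to volume $2\om$; this costs a factor $1+O(\eps)$. Then translate $c_\pm$ apart by $\eps$ each to obtain two tangent unit balls. This motion changes each ball by a symmetric difference of order $\eps$. The set $\Omega$ differs from $B_1(c_+)\cup B_1(c_-)$ by at most $m_\eps$ plus the lens. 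So $|\Omega\sym(B_1\cup B_2)|\le C_d\eps$, which gives $\A(\Omega)\le C_d\eps$.

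For the lower bound, argue by contradiction. Suppose two disjoint balls $B_1',B_2'$ of volume $|\Omega|/2$ satisfy $|\Omega\sym(B_1'\cup B_2')|\le\kappa\eps$ with $\kappa$ small. Since $|\Omega|=2\om(1+O(\eps))$, we may rescale and assume the balls have radius $\rho=1+O(\eps)$. Write $G=B_1(c_+)\cup B_1(c_-)$. Then
\[
|G\sym(B_1'\cup B_2')|\le |G\setminus\Omega|+|\Omega\sym(B'_1\cup B'_2)|\le m_\eps+C\eps^{(d+1)/2}+\kappa\eps\le(\delta_d+\kappa+o(1))\eps .
\]
So it suffices to show the following rigidity statement. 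Any two disjoint balls of radius $\rho=1+O(\eps)$ differ from $G$ by at least $c\eps$ in symmetric difference.

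First, each ball $B_j'$ must overlap one of the $B_1(c_\pm)$ in almost all of its volume, because the symmetric difference is small. After relabeling, $|B_1'\sym B_1(c_+)|$ and $|B_2'\sym B_1(c_-)|$ are both small. For two balls of comparable radius, $|B_\rho(p)\sym B_1(q)|\ge c_d\min(|p-q|+|\rho-1|,1)$. Hence the new centres satisfy $|p_\pm-c_\pm|\le C(\delta_d+\kappa)\eps$. Disjointness forces $|p_+-p_-|\ge 2\rho$. On the other hand, $|c_+-c_-|=2-2\eps$. Therefore
\[
|p_+-c_+|+|p_--c_-|\ge 2\rho-2+2\eps\ge 2\eps-C\eps\,|\rho-1|/\eps .
\]
Here $\rho-1$ is controlled by $\bigl||\Omega|-2\om\bigr|$, and we must check that this term is a small multiple of $\eps$. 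Indeed, $|\Omega|\le|G|\le2\om$, so $\rho\le1$ up to the rescaling. More precisely, $\bigl||\Omega|-2\om\bigr|\le m_\eps+C\eps^{(d+1)/2}\le(\delta_d+o(1))\eps$, so $|\rho-1|\le C(\delta_d+o(1))\eps$. This yields $|p_+-c_+|+|p_--c_-|\ge(2-C\delta_d-o(1))\eps$. That contradicts the centre bound $C(\delta_d+\kappa)\eps$ once $\delta_d,\kappa,\eps_d$ are small.

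The main obstacle is the quantitative symmetric-difference lower bound for balls with the correct constants, so that the $O(\delta_d\eps)$ volume errors cannot absorb the $2\eps$ gap. I would handle it by first fixing $\kappa$ and then taking $\delta_d$ small relative to the dimensional constant $c_d$. We do not use $E_\pm\subset B_1(c_\pm)$ beyond bounding $|\Omega|$ and $|G\setminus\Omega|$.
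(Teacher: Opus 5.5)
Your upper bound is the paper's comparison. The paper uses tangent balls $B_r(\pm re_1)$ with $2\om r^d=|E|$ instead of rescaling. Note also that $|\Omega|=2\om-m_\eps$ exactly, because $E_\pm\subset B_1(c_\pm)$ are disjoint, so the lens is not what controls $\bigl||\Omega|-2\om\bigr|$.

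Your lower bound takes a different route, and the step carrying the real content is asserted, not proved. Write $B_1'=B_\rho(p_+)$, $B_2'=B_\rho(p_-)$, and let $\theta\eps$ be your bound on $|G\sym(B_1'\cup B_2')|$. You pass from ``$|B_1'\sym B_1(c_+)|$ and $|B_2'\sym B_1(c_-)|$ are both small'' to $|p_\pm-c_\pm|\le C(\delta_d+\kappa)\eps$. That needs each individual symmetric difference to be at most $C\theta\eps+o(\eps)$, i.e.\ linear in $\eps$. The union estimate does not give this. A piece of $B_1'$ lying inside $B_1(c_-)$ near the contact point is invisible to $|G\sym(B_1'\cup B_2')|$, yet it counts in $|B_1'\sym B_1(c_+)|$. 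Without controlling this cross term you have no centre bound linear in $\eps$, let alone one with a constant small against the gap $2\eps$.

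To repair it, first localize qualitatively. By compactness as $\eps\to0$, a ball of radius near $1$ almost contained in two almost tangent unit balls is close to one of them. So $\gamma=|p_+-c_+|\le\gamma_0$ for a fixed small $\gamma_0$, and similarly $p_-$ is near $c_-$. (Both centres cannot be near the same $c_\pm$, since $|p_+-p_-|\ge2\rho$.) Next, every $x\in B_1'$ satisfies $(x-c_-)\cdot n>|p_+-c_-|-\rho\ge1-(2\eps+\gamma)$, where $n=(p_+-c_-)/|p_+-c_-|$ and we used $\rho\le1$. Hence $B_1'\cap B_1(c_-)$ lies in a cap of height $2\eps+\gamma$, and $|B_1'\cap B_1(c_-)|\le C_d(2\eps+\gamma)^{(d+1)/2}$. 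Since $|B_1'|\le\om$,
\[
 c_d\gamma-\om(1-\rho^d)\le|B_1'\sym B_1(c_+)|=2|B_1'\setminus B_1(c_+)|+\om(1-\rho^d)\le2\theta\eps+C_d(2\eps+\gamma)^{(d+1)/2}+\om(1-\rho^d).
\]
Because $(d+1)/2\ge3/2$, the power term is absorbed for $\gamma_0$ small, giving $\gamma\le C(\theta+\delta_d+\sqrt\eps)\eps$. Only then does your comparison with $|p_+-c_+|+|p_--c_-|\ge2\eps-2(1-\rho)$ yield the contradiction. The issue you flag as the main obstacle, the constant in the ball symmetric-difference bound, is harmless, since $\kappa$ and $\delta_d$ are chosen after it.

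The paper avoids all this with a second-moment argument. Take an admissible $Q=B_r(q_+)\cup B_r(q_-)$ whose balls both meet $E$. Disjointness gives $|q_+|^2+|q_-|^2\ge2r^2$, hence $\int_Q|x|^2\dd\ge2\om(1+\tfrac{d}{d+2})r^{d+2}$. On the other side, $E_\pm\subset B_1(c_\pm)$ gives $\int_E|x|^2\dd\le2\om((1-\eps)^2+\tfrac{d}{d+2})$. The difference is at least $4\om\eps-2\om\eps^2-\tfrac{2(d+1)}{d}m_\eps\ge c_d\eps$, and at most $16|Q\sym E|$ since everything lies in $B_4$. This needs no compactness, no localization of centres and no analysis near the contact point. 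Your route, once repaired, additionally says where competing centres must lie.
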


\begin{proof}
Put $m=m_\eps$, $E=E_+\cup E_-$, and
\[
r=\left(\frac{|E|}{2\om}\right)^{1/d}
=\left(1-\frac{m}{2\om}\right)^{1/d}
=1+O_d(m).
\]
The two tangent balls $B_r(\pm re_1)$ are admissible
in~\eqref{eq:defA}. By the triangle inequality, we have
\[
\begin{aligned}
&|E\sym(B_r(re_1)\cup B_r(-re_1))|\\
&\quad\le m+\sum_\pm
|B_1(c_\pm)\sym B_r(\pm re_1)|\\
&\quad\le C_d(\eps+m)\le C_d\eps.
\end{aligned}
\]
This gives the upper bound.

For the lower bound, let
$Q=B_r(q_+)\cup B_r(q_-)$ be any admissible pair.
If one of these balls does not meet $E$, then
$|Q\sym E|\ge\om r^d\ge\om/2$.
Otherwise, $Q\cup E\subset B_4$.
Since $|q_+-q_-|\ge2r$, we have
$|q_+|^2+|q_-|^2\ge2r^2$, and hence
\[
\int_Q|x|^2\dd
\ge2\om\left(1+\frac{d}{d+2}\right)r^{d+2}.
\]
On the other hand, $E\subset B_1(c_+)\cup B_1(c_-)$
gives
\[
\int_E|x|^2\dd
\le2\om\left((1-\eps)^2+\frac{d}{d+2}\right).
\]
Using
\[
r^{d+2}\ge
1-\frac{d+2}{d}\frac{m}{2\om},
\]
we obtain
\[
\begin{aligned}
16|Q\sym E|
&\ge\int_Q|x|^2\dd-\int_E|x|^2\dd\\
&\ge4\om\eps-2\om\eps^2
-\frac{2(d+1)}{d}m\\
&\ge c_d\eps,
\end{aligned}
\]
provided that $\eps_d$ and $\delta_d$ are
sufficiently small. Taking the infimum over
$Q$ and dividing by $|E|$ gives the lower
bound.
\end{proof}

\section{Proofs of the main results}\label{sec:proofs}
In this section we present the proofs of Theorems \ref{thm:main} and \ref{thm:sharp}.
\subsection{Proof of Theorem~\ref{thm:main}}\label{sec:global}
\begin{proof}
Note that both quantities in the statement are scale invariant. We assume $|\Omega|=2\om$ and put $s=\lambda_2(\Omega)/\lam-1\ge0$, by~\eqref{eq:classicalHKS}. Since $\A\le2$ and $\Phi_d$ is bounded on $[0,2]$, it is enough to consider $s$ below a fixed positive dimensional threshold.

The spectral splitting of~\cite[Lemma 7.49]{BDP}, valid for finite-measure open sets, gives disjoint open subsets $E_1,E_2\subset\Omega$ with $\lambda_1(E_i)\le\lambda_2(\Omega)$; see also~\cite[Lemma 3.1]{BP}. Write $|E_i|=\om R_i^d$. The Faber--Krahn inequality~\eqref{eq:classicalFK} gives $R_i^d\ge(1+s)^{-d/2}$. Since $R_1^d+R_2^d\le2$, we also have $R_i^d\le2-(1+s)^{-d/2}$. Consequently,
\begin{equation}\label{eq:balance}
 |R_i-1|\le C_ds,\qquad |\Omega\setminus(E_1\cup E_2)|\le C_ds\,,
\end{equation}
and $0\le R_i^2\lambda_1(E_i)/\lam-1\le C_ds$. Applying \eqref{eq:SV} and \eqref{eq:KJ} to $R_i^{-1}E_i$ gives $0\le T(B)-T(R_i^{-1}E_i)\le C_ds$, and hence $D(R_i^{-1}E_i)\le C_ds$.

Applying a scaling form of Lemma~\ref{lem:prepare}, we can obtain disjoint bounded sets $F_i\subset E_i$ with barycenters $c_i$. Set $r_i^d=|F_i|/\om$. Then from~\eqref{eq:cutmass}--\eqref{eq:cutenergy} and~\eqref{eq:balance} we can derive that
\begin{align}
 |\Omega\setminus(F_1\cup F_2)|+\sum_i|r_i-1|&\le C_ds,\label{eq:physicalmass}\\
 \sum_i|F_i\sym B_{r_i}(c_i)|&\le C_d\sqrt s,\label{eq:physicalshape}\\
 \sum_i\left[\int_{F_i}\left|\nabla w_{F_i}+\frac{x-c_i}{d}\right|^2\dd
 +\|w_{F_i}-w_{B_{r_i}(c_i)}\|_2^2\right]&\le C_ds\,.\label{eq:physicalenergy}
\end{align}
From~\eqref{eq:physicalmass}--\eqref{eq:physicalshape}, we can further obtain
\begin{equation}\label{eq:unitenergy}
 \sum_i|F_i\sym B_1(c_i)|
 \le\sum_i\bigl(|F_i\sym B_{r_i}(c_i)|+\om|r_i^d-1|\bigr)
 \le C_d\sqrt s\,.
\end{equation}
By the formula~\eqref{eq:balltorsion}, the two torsion functions with radii $r_i$ and $1$ differ pointwise by at most $C_d|r_i-1|$ and have support in $B_2(c_i)$. Thus we have
\begin{equation}\label{eq:radiusL2}
 \sum_i\|w_{B_{r_i}(c_i)}-w_{B_1(c_i)}\|_2^2
 \le C_d\sum_i|r_i-1|^2\le C_ds^2\,.
\end{equation}
Combining~\eqref{eq:physicalenergy} and~\eqref{eq:radiusL2}, we obtain~\eqref{eq:contacthyp} with right-hand side $C_ds$.

Set $\tau=(2-|c_1-c_2|)_+$. Disjointness of $F_1,F_2$ and~\eqref{eq:unitenergy} give
\begin{equation*}
 |B_1(c_1)\cap B_1(c_2)|\le\sum_i|B_1(c_i)\setminus F_i|\le C_d\sqrt s\,.
\end{equation*}
Since $s$ is small, we have $\tau<1/4$. From Proposition~\ref{prop:contact} we get $\Phi_d(\tau)\le C_ds$.

Let $n=(c_2-c_1)/|c_2-c_1|$, $q_1=c_1-\tau n/2$, and $q_2=c_2+\tau n/2$. Then $|q_1-q_2|\ge2$ and $\sum_i|q_i-c_i|=\tau$. The balls $B_1(q_i)$ are admissible in~\eqref{eq:defA}.Since a translation by $h$ changes a unit ball by at most $C_d|h|$ in measure, we can combine~\eqref{eq:physicalmass} and~\eqref{eq:unitenergy} to deduce
\begin{align}\label{eq:Afinal}
 2\om\A(\Omega)
 &\le |\Omega\setminus(F_1\cup F_2)|+\sum_i|F_i\sym B_1(c_i)|+C_d\sum_i|q_i-c_i|\\
 &\le C_d(\sqrt s+\tau)\,.\notag
\end{align}
The formulas in~\eqref{eq:Phi} show that $\Phi_d$ is increasing near zero and, for each fixed $K\ge1$, satisfies $\Phi_d(Ku)\le C_{d,K}\Phi_d(u)$ for sufficiently small $u$. Since $u+v\le2\max\{u,v\}$, this also gives $\Phi_d(u+v)\le C_d(\Phi_d(u)+\Phi_d(v))$ near zero. Moreover $\Phi_d(\sqrt s)\le C_ds$. It follows from~\eqref{eq:Afinal} and Proposition~\ref{prop:contact} that
\begin{equation*}
 \Phi_d(\A(\Omega))\le C_d\bigl(\Phi_d(\sqrt s)+\Phi_d(\tau)\bigr)\le C_ds\,,
\end{equation*}
\enlargethispage{\baselineskip}
which concludes the proof.
\end{proof}

\subsection{Proof of Theorem~\ref{thm:sharp}}\label{sec:optimality}
\begin{proof}
Assume first that $d\ge3$, and put $q=(d+2)/2$, $c_\eps=(1-\eps)e_1$, and $P_\eps=B_1(c_\eps)\cap\{x_1>0\}$. By van den Berg~\cite[Theorem 1(ii), (13)--(14)]{vdB}, we have
\begin{equation}\label{eq:capdeficit}
 0\le |P_\eps|^{2/d}\lambda_1(P_\eps)-\om^{2/d}\lam
 \le C_d\eps^q.
\end{equation}

Since $P_\eps$ is a bounded convex domain, it admits
a smooth convex inner exhaustion; see
~\cite[Theorem A]{Blocki1997}.
By domain monotonicity and the variational
characterization of the first Dirichlet eigenvalue,
we may choose, for each sufficiently small $\eps>0$,
a smooth convex domain $E_\eps\Subset P_\eps$ such that
\begin{equation}\label{eq:inneraccuracy}
 |P_\eps\setminus E_\eps|\le\eps^2,\qquad
 0\le\lambda_1(E_\eps)-\lambda_1(P_\eps)\le\eps^q.
\end{equation}
Since $|E_\eps|\le|P_\eps|$, we have:
\begin{align*}
 |E_\eps|^{2/d}\lambda_1(E_\eps)-\om^{2/d}\lam
 &\le |P_\eps|^{2/d}\lambda_1(P_\eps)-\om^{2/d}\lam\\
 &\quad+|P_\eps|^{2/d}\bigl(\lambda_1(E_\eps)-\lambda_1(P_\eps)\bigr)
 \le C_d\eps^q.
\end{align*}
We can also compute that
\begin{equation*}
 a_\eps=|B_1(c_\eps)\setminus P_\eps|
 =\omega_{d-1}\int_0^\eps(2t-t^2)^{(d-1)/2}\,dt
 \asymp_d\eps^{(d+1)/2}.
\end{equation*}
Set $\Omega_\eps=E_\eps\cup(-E_\eps)$. Its two components are smooth, bounded, connected, and congruent. Their closures have positive distance because $E_\eps\Subset\{x_1>0\}$. The total missing volume in Lemma~\ref{lem:matching} is
\begin{equation*}
 m_\eps=2a_\eps+2|P_\eps\setminus E_\eps|
 \le C_d\eps^{(d+1)/2}+2\eps^2=o(\eps).
\end{equation*}
For sufficiently small $\eps$, $m_\eps\le\delta_d\eps$, so Lemma~\ref{lem:matching} gives $\A(\Omega_\eps)\asymp_d\eps$. The direct-sum spectrum, counted with multiplicity, gives $\lambda_2(\Omega_\eps)=\lambda_1(E_\eps)$, and therefore
\begin{equation}\label{eq:highdimdeficit}
 \HKS(\Omega_\eps)
 =2^{2/d}\bigl(|E_\eps|^{2/d}\lambda_1(E_\eps)-\om^{2/d}\lam\bigr)
 \le C_d\eps^q.
\end{equation}
The corresponding lower bound follows from Theorem~\ref{thm:main}, already proved in Subsection~\ref{sec:global}.

Now let $d=2$. Fix $c=(1-\eps)e_1$, $z=-\eps e_1$,
$R=1/8$, $a=\eps^{1/4}$, and $B^+=B_1(c)$.
Choose smooth cutoffs $\chi_{\rm in},\chi_{\rm out}$
valued in $[0,1]$, with $\chi_{\rm in}(t)=0$ for
$t\le1$ and $\chi_{\rm in}(t)=1$ for $t\ge2$,
and $\chi_{\rm out}(t)=1$ for $t\le R/2$
and zero for $t\ge R$. Set
\begin{equation*}
 v_a(t)=\frac{\chi_{\rm in}(t/a)\chi_{\rm out}(t)}{t},
 \qquad L_a=\int_0^R v_a(t)\,dt,
 \qquad g(r)=\frac{2\eps}{L_a}\int_r^R v_a(t)\,dt
\end{equation*}
for $r\le R$, and $g=0$ beyond $R$. Then
\begin{equation}\label{eq:planarlog}
 L_a=\log(R/a)+O(1)
 =\tfrac14|\log\eps|+O(1).
\end{equation}
The function $g$ is smooth, nonnegative, and
nonincreasing, and equals $2\eps$ on $[0,a]$.
Define
\begin{equation}\label{eq:push}
 F(x)=x+g(|x-z|)e_1,\qquad
 E^+=F(B^+),\qquad E^-=-E^+,\qquad
 \Omega_\eps=E^+\cup E^-.
\end{equation}
Since $g$ is constant near zero, the map $F$ is smooth near $z$. Moreover,
$\|g'\|_\infty\le C\eps/(aL_a)\to0$,
so $F$ is a smooth bi-Lipschitz map.
Both image components are smooth, bounded,
connected, and congruent.

For $x\in\overline{B^+}$ with $x_1\le\eps$,
we have
\[
 |x-z|^2\le 2(x_1+\eps)
 \le4\eps<a^2
\]
for sufficiently small $\eps$. Since
$x_1\ge-\eps$ and $g=2\eps$ on $[0,a]$,
we obtain
\[
 F_1(x)=x_1+2\eps\ge\eps.
\]
For $x_1>\eps$, the same inequality follows
from $g\ge0$. Consequently,
\[
 \overline{E^+}\subset\{x_1\ge\eps\},
 \qquad
 \overline{E^-}\subset\{x_1\le-\eps\}.
\]
Thus the two components have disjoint
closures with distance at least $2\eps$.

By radial integration
and~\eqref{eq:planarlog}, we can compute that
\[
 \int_{B^+}|D(F-I)|^2\dd
 \le\frac{C\eps^2}{L_a^2}
       \int_a^R\frac{dr}{r}
 \le\frac{C\eps^2}{L_a}
 \le\frac{C\eps^2}{|\log\eps|}.
\]
Applying Lemma~\ref{lem:rayleigh} after translating
$B^+$ to $B$, we obtain
\begin{equation}\label{eq:planarcomponent}
 0\le |E^+|\lambda_1(E^+)-\om\lam
 \le\frac{C\eps^2}{|\log\eps|}.
\end{equation}

Note that along each line parallel to $e_1$, the first
coordinate of $F$ is strictly increasing.
For $y\in(-1,1)$, the left endpoint of the
ball section is
\[
 L(y)=1-\eps-\sqrt{1-y^2}
\]
and moves right by $g(r_L(y))$, where
$r_L(y)=|(L(y),y)-z|\ge|y|$.
The right endpoint is unchanged, since its
distance from $z$ is at least one.
Consequently $E^+\subset B^+$, and
\begin{align}\label{eq:missing}
 |B^+\setminus E^+|
 &=\int_{-1}^1 g(r_L(y))\,dy
 \le2\int_0^R g(r)\,dr\notag\\
 &=\frac{4\eps}{L_a}\int_0^R t v_a(t)\,dt
 \le\frac{C\eps}{L_a}=o(\eps).
\end{align}

By~\eqref{eq:missing}, Lemma~\ref{lem:matching}
applies for sufficiently small $\eps$ and gives
$\A(\Omega_\eps)\asymp\eps$.
Since the two components are congruent and
disjoint, the direct-sum spectrum gives
$\lambda_2(\Omega_\eps)=\lambda_1(E^+)$.
From~\eqref{eq:planarcomponent}, we obtain
\[
 \HKS(\Omega_\eps)
 =2\bigl(|E^+|\lambda_1(E^+)-\om\lam\bigr)
 \le\frac{C\eps^2}{|\log\eps|}.
\]
The matching lower bound follows from
Theorem~\ref{thm:main}, since
$\Phi_2(\A(\Omega_\eps))\asymp\Phi_2(\eps)$.
\end{proof}

\begingroup
\makeatletter
\let\@tocwrite\@gobbletwo
\makeatother
\section*{Declarations}

\noindent\textbf{Conflict of interest.}
The authors declare that they have no conflicts of interest.

\medskip

\noindent\textbf{Data availability.}
Data sharing is not applicable to this article, as no datasets were generated or analyzed during the current study.

\medskip

\noindent\textbf{Generative AI disclosure.}
The initial research ideas and overall direction of this work originated with the authors. Generative AI tools were used to assist with research discussions, checking mathematical arguments and references, and preparing and editing the manuscript, including its LaTeX source. The authors reviewed the AI-assisted material and take full responsibility for the mathematical results and the final manuscript.
\endgroup

\bibliographystyle{amsplain}
\bibliography{HKS_StageX_Final_References}

\end{document}